\documentclass{article} 

\usepackage[english]{babel}
\usepackage[utf8]{inputenc}
\usepackage[T1]{fontenc}

\usepackage[pdftex]{graphicx}
\usepackage{graphicx}
\usepackage{anysize}
\usepackage{euscript}
\usepackage{amscd}
\usepackage{marvosym}
\usepackage{upgreek}
\usepackage{color}
\usepackage{dsfont}
\usepackage{mathrsfs}
\usepackage{fancyhdr}

\usepackage{stmaryrd}

\usepackage[titletoc]{appendix}

\usepackage{amsmath, latexsym,amssymb, amsthm, amsfonts}

\newtheorem{theorem}{Theorem}

 \theoremstyle{plain}
  \newtheorem{thm}{Theorem}[section]
  \newtheorem{lem}[thm]{Lemma}
  \newtheorem{prop}[thm]{Proposition}
  \newtheorem{defn}[thm]{Definition}
  \newtheorem{cor}[thm]{Corollary}

\input{xy}
\xyoption{poly}
\xyoption{2cell}
\xyoption{all}

\def\modu{\mathop{\rm mod}\nolimits}

\def\Ker{\mathop{\rm Ker}\nolimits}

\newcommand{\pd}{\mathrm{pd}}
\newcommand{\id}{\mathrm{id}}
\newcommand{\phid}{\phi\mbox{-}\mathrm{dim}}
\newcommand{\psid}{\psi\mbox{-}\mathrm{dim}}

\newcommand{\E}{\mathrm {Ext^1_\Lambda}}

\newcommand{\Em}{\mathrm {Ext}^m_\Lambda}

\newcommand{\Ext}{\mathrm {Ext}}
\newcommand{\Ho}{\mathrm{Hom}_\Lambda}
\newcommand{\Hom}{\mbox{Hom}}

\newcommand{\mas}{$\underline{\mathrm {mod}}\Lambda$}
\newcommand{\chs}{$\underline{\chi}_X$}
\newcommand{\chss}{$\underline{\chi}_X$}
\newcommand{\inc}{\iota_{N_*}}

\newcommand{\pnc}{\pi_{N_*}}

\newcommand{\imct}{\iota_M^*}

\newcommand{\pmct}{\pi_M^*}
\newcommand{\On}{\mathbf{\Omega}}
\newcommand{\PM}{P_0(M)}
\newcommand{\PN}{P_0(N)}
\def\dar[#1]{\ar@<2pt>[#1]\ar@<-2pt>[#1]}

\newtheorem{rem}[thm]{Remark}

\newtheorem{eje}[thm]{Example}
\newtheorem{ejes}[thm]{Examples}

\newcommand{\N}{\mathbb{N}}

\title{Locally Gorenstein algebras,\\ a generalization of Gorenstein algebras}
\author{Leonardo Alarcon, María Andrea Gatica, Marcelo Lanzilotta}
\date{}

\begin{document}

\maketitle
\begin{abstract}
For an Artin algebra $\Lambda$, we define the class of locally Gorenstein modules, which is a generalization of semi-Gorenstein projective modules and, therefore, a generalization of projective Gorenstein modules. For a locally Gorenstein module $X\in\modu\Lambda$ we construct $\underline{{\chi}}_X$ a subcategory  of $\underline{{\rm mod}}$ $\Lambda$ and we prove that the syzygy functor is faithful and full on $\underline{{\chi}}_X$. Using that, we prove that the Igusa-Todorov functions $\phi$ and $\psi$ vanish on the family of locally Gorenstein modules. Finally, using locally Gorenstein modules, we define the locally Gorenstein algebras, a generalization of Gorenstein algebras, and we prove that their finitistic, and the $\phi$ and $\psi$ dimensions are finite.
\end{abstract}

\noindent Keywords: Semi-Gorenstein modules,Locally Gorenstein modules, Locally Gorenstein algebras, Finitistic dimension, Igusa-Todorov functions, Morita context algebra.\\

\section{Introduction}
In this work $\Lambda$ will denote an Artin algebra over a commutative artinian ring with unity $R$, where $\modu\Lambda$ will denote  the category of finitely generated left modules. In addition $\underline{\modu}\Lambda$ will denote the quotient of $\modu\Lambda$ by the ideal of morphisms which factorize by projectives modules in $\modu\Lambda$.
 
The concept of Gorenstein projective module (see 2.1) goes back to a work of Auslander and Bridger in 1969 \cite{AB}, where the $G$-dimension of a finitely generated module over a Noetherian ring has been introduced. Enochs and Jenda in 1995 \cite{EJ} defined the concept of Gorenstein projective module, and now a day is clear that a module $M$ is Gorenstein projective if and only if the $G$-dimension of $M$ is zero. 

Among the many properties that Gorenstein projective modules satisfy, we highlight the following two: on the one hand, Gorenstein projective modules are semi-Gorenstein projective modules, that is, they belong to $^\perp\Lambda$ (see 2.2) and, on the other hand, they vanish for the Igusa-Todorov functions (see 2.2 and \cite{LM}). 

In this work we introduce a new concept \textit{locally Gorenstein projective modules} (see Definition 3.1) that generalize semi-Gorenstein projective modules (and therefore also Gorenstein projective modules) 
but, nevertheless, the Igusa - Todorov functions still have a value of zero on this family of modules (see Theorem 2).

It is worth noting that to prove Theorem 2 we use a construction similar to that used by Auslander and Bridge in \cite{AB}, namely, starting from a locally Gorenstein projective module we construct a full subcategory of $\underline{\modu}\Lambda$, which we denote by $\underline{\chi}_X$, and we prove that the functor $\Omega$ restricted to  $\underline{\chi}_X$ is faithful and full (see Theorem 1).

From the locally Gorenstein projective modules, we define the \textit{locally Gorenstein algebras} (see Definition \ref{def algebras l-G}) by generalizing the concept of Gorenstein algebras defined by Auslander and Reiten (see \cite{AR}). Gorenstein algebras have very nice homological behaviors, their homological dimensions: finitistic dimension, $\phi$-dimension and $\psi$-dimension are finite, and therefore satisfy the finitistic Conjecture. These properties are still preserved in the generalization to locally Gorenstein algebras (see Proposition 5.4 and Theorem 3). It is worth mentioning at this point that there are at least, two families of algebras which also generalize Gorenstein algebras. We are referring to virtually Gorenstein algebras, defined by Beligiannis and Reiten in \cite{BelR}; and weakly Gorenstein algebras, defined by Ringel and Zhang in \cite{RZ}. In a recent paper, Li is said to have shown that every virtually Gorenstein algebra is weakly Gorenstein (see \cite{Li}). The new family of algebras that we define, locally Gorenstein algebras,  does not coincide with any of these families. For example the bound quiver algebra given by the quiver $\xymatrix@C=5mm{&1\ar@(ul,dl)_{\alpha}\ar[r]^{\beta}&2}$ and the relations $\{\alpha^2, \beta\alpha\}$ is virtually Gorenstein and also weakly Gorenstein but is not locally Gorenstein.

Finally, in \ref{seccion 5.2}, we study locally Gorenstein algebras arising from a Morita context (see \cite{AMIT}, \cite{GP}). We establish sufficient conditions for a Morita context algebra to be locally Gorenstein, thus obtaining a considerable family of examples of such type of algebras.

\section{Preliminaries}

\subsection{$\Lambda$-Gproj and $^\perp\Lambda$}

In this section, we recall the definition of Gorenstein algebra, as well as the definition of projective Gorenstein modules over an Artin algebra.\\

An Artin algebra $\Lambda$ is a Gorenstein algebra if the id$(_\Lambda\Lambda)<\infty$ and the id$(\Lambda_\Lambda)<\infty$. In addition, we say that $\Lambda$ is $n$-Gorenstein if $n$ is the smallest integer such that id$(_\Lambda\Lambda)\leq n$ and id$(\Lambda_\Lambda)\leq n$.\\

A complex of projective modules is totally acyclic if it is an exact sequence of projective $\Lambda$-modules:
$$\xymatrix{(\mathcal{P},p): &&\cdots\ar[r]^{p_{-2}}&P_{-1}\ar[r]^{p_{-1}}&P_0\ar[r]^{p_0}&P_1\ar[r]^{p_1}&\cdots}$$

\noindent  and the complex:
$$\xymatrix{\rm{Hom}_\Lambda((\mathcal{P},p),\Lambda):  & \cdots\ar[r]^{p_{1}^*\hspace{5mm}}&{\rm Hom}_\Lambda(P_{1},\Lambda) \ar[r]^{p_{0}^*}&{\rm Hom}_\Lambda(P_0,\Lambda)\ar[r]^{p_{-1}^*}&{\rm Hom}_\Lambda(P_{-1},\Lambda)\ar[r]^{\hspace{8mm}p_{-2}^*}&\cdots}$$
\noindent is also exact.\\

A finitely generated $\Lambda$-module $G$ is said to be Gorenstein projective, provided that there is a totally acyclic complex $(\mathcal{P},p)$ of projective modules such that $G\cong{\rm Ker}(p_0)$.\\

In particular, for all $i\in\mathbb{Z}$, the $\Lambda$-module Ker$(p_i)$ is Gorenstein projective. We will denote by $\Lambda$-Gproj the full subcategory of mod $\Lambda$ determined by the Gorenstein projective $\Lambda$-modules.

We will denote with $^\perp\Lambda$ the subfamily of the semi-Gorenstein projective modules of  $\modu \Lambda$ (see \cite{RZ}), that is, the subfamily defined by
$^\perp\Lambda=\{X\in \modu\,\Lambda:{\rm Ext}^i_\Lambda(X,\Lambda)=0,\, \forall i\geq 1\}.$

\begin{rem}\label{gproj son estables}
\begin{enumerate}

\item Let $X$ be a $\Lambda$-module and let
$$\xymatrix{\cdots\ar[r]^{q_2}&Q_1\ar[r]^{q_1}&Q_0\ar[r]^{q_0}&X\ar[r]&0}$$
 \noindent be a projective resolution of $X$. Then, $X\in\ \!\!\! ^\perp\Lambda$ if an only if the sequence 
$$(*)\ \xymatrix{0\ar[r]&{\rm Hom}_\Lambda(X,\Lambda)\ar[r]^{q_0^*}&{\rm Hom}_\Lambda(Q_0,\Lambda)\ar[r]^{q_1^*}&{\rm Hom}_\Lambda(Q_1,\Lambda)\ar[r]^{\hspace{8mm}q_2^*}&\cdots}$$
\noindent is exact. In fact, the sequence $(*)$ is exact if an only if ${\rm Ext}^i_\Lambda(X,\Lambda)=0$ for all $i\geq 1$, thus the sequence $(*)$  is  exact if an only if $X\in\ \!\! ^\perp\Lambda$.

\item All Gorenstein projective $\Lambda$-modules belong to $^\perp\Lambda$, that is, $\Lambda-{\rm Gproj}\subseteq ^\perp\!\negmedspace\Lambda$. In fact, let $M$ be a Gorenstein projective $\Lambda$-module. By definition, there is an acyclic complex, $(\mathcal{P},p)$, of projective modules:
$$\xymatrix{(\mathcal{P},p)=&&\cdots\ar[r]^{p_{-2}}&P_{-1}\ar[r]^{p_{-1}}&P_0\ar[r]^{p_0}&P_1\ar[r]^{p_1}&\cdots}$$
with $M\cong {\rm Ker}(p_0)$, such that the following sequence:  
$$\xymatrix{\cdots\ar[r]^{p_{1}^*\hspace{8mm}}&{\rm Hom}_\Lambda(P_{1},\Lambda) \ar[r]^{p_{0}^*}&{\rm Hom}_\Lambda(P_0,\Lambda)\ar[r]^{p_{-1}^*}&{\rm Hom}_\Lambda(P_{-1},\Lambda)\ar[r]^{\hspace{8mm}p_{-2}^*}&\cdots}$$
is exact.

\noindent Then  ${\rm Ext}^i_\Lambda(M,\Lambda)={\Ker}(p_{-(i+1)}^*)/{\rm Im}(p_{-i}^*)=0$ for all $i\geq 1$.
\end{enumerate}
\end{rem}

The following provides a characterisation of $n$-Gorenstein algebras.

\begin{thm}\cite{C}\label{teo de Chen}
Let $\Lambda$ be an Artin algebra and let $n\geq 0$. Then,  $\Lambda$ is  $n$-Gorenstein if an only if $\Lambda$-Gproj$=\Omega^n({\rm mod}\,\Lambda)$. 
 
\end{thm}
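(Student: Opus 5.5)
We will prove the two inclusions separately for the forward direction, and then the converse.

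For the forward direction, assume $\Lambda$ is $n$-Gorenstein. To see $\Omega^n({\rm mod}\,\Lambda)\subseteq \Lambda$-Gproj, take $M=\Omega^n(N)$. Dimension shifting gives ${\rm Ext}^i_\Lambda(M,\Lambda)\cong{\rm Ext}^{i+n}_\Lambda(N,\Lambda)$, and this vanishes because ${\rm id}(_\Lambda\Lambda)\le n$. So $M\in{}^\perp\Lambda$. Gluing a projective resolution of $M$ gives the left half of a complex, whose $\Lambda$-dual is exact by Remark \ref{gproj son estables}(1).

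For the right half we need a coresolution of $M$ by projectives that stays exact under ${\rm Hom}_\Lambda(-,\Lambda)$. We will use the standard fact that, over a Gorenstein algebra, ${}^\perp\Lambda$ coincides with the modules $M$ for which $M^*={\rm Hom}_\Lambda(M,\Lambda)$ is also in ${}^\perp\Lambda_{\Lambda^{op}}$ and $M$ is reflexive. The argument is as follows:
\begin{itemize}
\item Since $\Lambda^{op}$ is also $n$-Gorenstein, $M^*$ is an $n$-th syzygy in ${\rm mod}\,\Lambda^{op}$, because dualizing the truncated resolution $P_{n-1}\to\cdots\to P_0$ exhibits this (up to projective summands, using exactness of the dual).
\item Hence $M^*\in{}^\perp\Lambda_{\Lambda^{op}}$.
\item Take a projective resolution $Q_\bullet\to M^*$ and dualize back. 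Reflexivity of $M$ (an $n$-th syzygy with ${\rm Ext}^{1,2}(-,\Lambda)$-vanishing on the transpose side, coming from ${\rm id}\,\Lambda_\Lambda\le n$) gives an exact $0\to M\to Q_0^*\to Q_1^*\to\cdots$ whose $\Lambda$-dual is exact.
\end{itemize}
Splicing the two halves yields a totally acyclic complex.

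For the reverse inclusion $\Lambda$-Gproj $\subseteq\Omega^n({\rm mod}\,\Lambda)$, note that every Gorenstein projective $G$ is the kernel in a totally acyclic complex, so $G$ is an $m$-th syzygy for every $m$; in particular $G=\Omega^n(C)$, where $C$ is the cokernel $n$ steps to the right. Here $\Omega^n$ is taken up to projective summands, with the usual convention that $\Omega^n({\rm mod}\,\Lambda)$ is closed under adding projectives.

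For the converse, assume $\Lambda$-Gproj$=\Omega^n({\rm mod}\,\Lambda)$. Then every $\Omega^n N$ lies in ${}^\perp\Lambda$ by Remark \ref{gproj son estables}(2). Taking $N=D(\Lambda_\Lambda)$ and its summands, together with dimension shifting, gives ${\rm Ext}^{i}_\Lambda(N,\Lambda)=0$ for $i>n$ and all $N$, so ${\rm id}(_\Lambda\Lambda)\le n$.

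For ${\rm id}(\Lambda_\Lambda)$ we exploit the right half of the totally acyclic complexes. Every $n$-th syzygy embeds in a projective with Gorenstein projective cokernel. So for a $\Lambda^{op}$-module $Y$, we would compute ${\rm Ext}^i_{\Lambda^{op}}(Y,\Lambda)$ through the duality $(-)^*$ between Gproj$(\Lambda)$ and Gproj$(\Lambda^{op})$, showing that the $n$-th syzygy of $Y$ is the dual of a Gorenstein projective and hence lies in ${}^\perp\Lambda_{\Lambda^{op}}$. Alternatively, we could cite Auslander–Reiten's result that ${\rm id}(_\Lambda\Lambda)<\infty$ together with a finiteness condition forces ${\rm id}(\Lambda_\Lambda)<\infty$. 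Minimality of $n$ follows by comparing with $n-1$ on both sides, since the equality for $n$ but not $n-1$ matches the minimal bound.

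The main obstacle is the coresolution and reflexivity step in the forward direction, together with transferring the bound to $\Lambda_\Lambda$ in the converse. Both require a careful use of the transpose ${\rm Tr}$ and of the exact sequence $0\to{\rm Ext}^1({\rm Tr}M,\Lambda)\to M\to M^{**}\to{\rm Ext}^2({\rm Tr}M,\Lambda)\to0$. I would try that sequence first, and otherwise simply cite \cite{C}, as the paper does.
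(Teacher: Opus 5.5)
The paper gives no proof of this statement; it only cites Chen. Measured against the standard argument, your forward direction is sound in outline, but the converse has a genuine gap at $\mathrm{id}(\Lambda_\Lambda)\le n$.

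\emph{The gap.} Your duality route is circular. The hypothesis concerns only left modules, and nothing in it makes an $n$-th syzygy $Z=\Omega^n_{\Lambda^{op}}Y$ equal to $G^*$ with $G\in\Lambda$-Gproj. You can always write $Z=W^*$ with $W$ essentially a transpose. But $W$ need not be Gorenstein projective, since the hypothesis controls only $\Omega^nW$. Relating $W$ to such syzygies would require exactness of the dualized resolution $Q_\bullet^*$, which is precisely the vanishing of $\mathrm{Ext}^i_{\Lambda^{op}}(Y,\Lambda)$ you want. The Auslander--Reiten alternative is not a proof until the finiteness condition is named and verified.

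\emph{The missing idea} is $D(\Lambda_\Lambda)$, which you invoked where it is not needed (for $\mathrm{id}({}_\Lambda\Lambda)\le n$ every $N$ works directly). Since $\mathrm{id}(\Lambda_\Lambda)=\mathrm{pd}\,{}_\Lambda D(\Lambda_\Lambda)$, it suffices to show $\mathrm{Ext}^{n+1}_\Lambda(D(\Lambda_\Lambda),-)=0$.
\begin{itemize}
\item By hypothesis $G=\Omega^nD(\Lambda_\Lambda)$ is Gorenstein projective. So there is an exact sequence $0\to G\to Q^0\to\cdots\to Q^{n-1}\to G'\to 0$ with $Q^i$ projective and all cokernels Gorenstein projective, hence in ${}^\perp\Lambda$.
\item Therefore the identity of $G$ lifts to a chain map into $0\to G\to P_{n-1}\to\cdots\to P_0\to D(\Lambda_\Lambda)\to 0$, ending in some $\varphi\colon G'\to D(\Lambda_\Lambda)$.
\item By naturality of dimension shifting, $\mathrm{Ext}^{n+1}(\varphi,X)$ is an isomorphism for every $X$, since both sides are identified with $\mathrm{Ext}^1(G,X)$.
\item But $G'$ embeds in a projective and $D(\Lambda_\Lambda)$ is injective, so $\varphi$ factors through a projective and $\mathrm{Ext}^{n+1}(\varphi,X)=0$.
\end{itemize}
Equivalently, the Auslander--Buchweitz hull $0\to D(\Lambda_\Lambda)\to H\to G'\to 0$ with $\mathrm{pd}\,H\le n$ splits.

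\emph{Minimality.} With the paper's convention that $n$ is the \emph{smallest} bound, the ``if'' direction is false, so your minimality sentence cannot be rescued. Here $\Omega^n(\mathrm{mod}\,\Lambda)$ is closed under projective summands, as you rightly assume. Then $\Lambda$-Gproj $\subseteq\Omega^m(\mathrm{mod}\,\Lambda)\subseteq\Omega^n(\mathrm{mod}\,\Lambda)$ for $m\ge n$. So equality at $n$ forces equality at every $m\ge n$ and says nothing about $n-1$. For example, a self-injective algebra has $\Lambda$-Gproj $=\mathrm{mod}\,\Lambda=\Omega^1(\mathrm{mod}\,\Lambda)$ but is $0$-Gorenstein. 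The statement must be read, as in Chen, with ``$n$-Gorenstein'' meaning both injective dimensions are $\le n$.

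\emph{A fix in the forward direction.} Dualizing $P_{n-1}\to\cdots\to P_0$ does not show $M^*$ is a syzygy. It produces maps \emph{into} $M^*$, and it is not exact: its cohomology is $\mathrm{Ext}^i(N,\Lambda)$ for $1\le i\le n$. Dualize instead a projective resolution $Q_\bullet\to M$. Since $M\in{}^\perp\Lambda$, the sequence $0\to M^*\to Q_0^*\to Q_1^*\to Q_2^*\to\cdots$ is exact. Hence both $M^*$ and $\mathrm{Tr}\,M\cong\mathrm{Im}(Q_1^*\to Q_2^*)$ (up to projective summands) are $n$-th syzygies over $\Lambda^{op}$, and so lie in ${}^\perp(\Lambda_\Lambda)$ because $\mathrm{id}(\Lambda_\Lambda)\le n$. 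This gives $M^*\in{}^\perp(\Lambda_\Lambda)$ and, via the Auslander--Bridger sequence, reflexivity of $M$.
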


\begin{rem}\label{obs ortogonal igual a Gproj} If $\Lambda$ is a $n$-Gorenstein algebra then  $\Lambda$-Gproj$=\,^\perp\Lambda$ \cite[Theorem 2.3.3]{C}.
\end{rem}

\subsection{Igusa-Todorov functions}
In this section we recall the definitions and some properties of the functions defined by K. Igusa and G. Todorov in \cite{IT}.

Let $K_0$ be the quotient of the free abelian group generated by the symbols of the form $[M]$, where $M\in \modu\Lambda$, by the subgroup generated by
\begin{enumerate}
\item $[A]-[B]-[C]$ if $A\cong C\oplus B$, with $A,\,B,\,C$ in $\modu \Lambda$.
\item $[P]$ if $P$ is a projective $\Lambda$-module.
\end{enumerate}

Note that $K_0$ is the free abelian group generated by the isomorphic classes of non-projective indecomposable $\Lambda$-modules.
Then syzygy $\Omega$ defines the morphism of groups $\mathbf{\Omega}: K_0\rightarrow K_0$ given by $\mathbf{\Omega}([M])=[\Omega(M)]$ for all  $[M]$ in $K_0$.  

For each free group $X$, rg$\,X$ denotes the rank of $X$. Given a finitely generated $\Lambda$ module $M$, we denote by $\langle {\rm add}\, M\rangle$ the subgroup of $K_0$ generated by the indecomposable direct summands of $M$.

We recall that \textbf{the first Igusa-Todorov function or function $\phi$ of Igusa-Todorov} of $M\in\modu\Lambda$ is defined as the smallest non-negative integer, $\phi(M)$, such that:  $${\rm rg}\,\mathbf{\Omega}^{\phi(M)}(\langle {\rm add}\, M\rangle)={\rm rg}\,\mathbf{\Omega}^{\phi(M)+s}(\langle {\rm add}\, M\rangle),$$ or equivalently, the smallest non-negative integer such that:
$$\mathbf{\Omega}:\mathbf{\Omega}^{\phi(M)+s}(\langle{\rm add}\,M\rangle)\rightarrow \mathbf{\Omega}(\mathbf{\Omega}^{\phi(M)+s}(\langle{\rm add}\,M\rangle))$$ is a monomorphism for all $s\in \N$.

\begin{prop}\cite{IT}\label{propiedades de phi} Let $M$ and $N$ be modules in $\modu \Lambda$, then the following statements hold:

\begin{enumerate}
\item If $\pd (M)<\infty$ then $\phi(M)=\pd (M)$.
\item If $M$ is indecomposable with $\pd(M)=\infty$, then $\phi(M)=0$.
\item $\phi(M)\leq \phi(M\oplus N)$.
\item $\phi(sM)=\phi(M)$ for all $s\in \mathbb{N}$, where $sM$ denote the direct sum of $M$ $s$-times.

\end{enumerate}
\end{prop}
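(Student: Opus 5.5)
Since this proposition is quoted from \cite{IT}, I would prove each item directly from the definition of $\phi$ as the point where ${\rm rg}\,\On^{k}(\langle{\rm add}\,M\rangle)$ stabilizes. Throughout I use that $\On$ is a group morphism on $K_0$, so ${\rm rg}\,\On^{k}(\langle{\rm add}\,M\rangle)$ is non-increasing in $k$. Being a non-increasing sequence of non-negative integers, it stabilizes, so $\phi(M)$ is well defined.

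For (1), suppose $\pd(M)=n<\infty$ and write $M=\bigoplus M_j$ with $M_j$ indecomposable. Then $\On^{n}(\langle{\rm add}\,M\rangle)=0$ because every $\Omega^n(M_j)$ is projective, so the rank is $0$ from $n$ on and $\phi(M)\le n$. For the reverse inequality, choose a summand $M_j$ with $\pd(M_j)=n$. Then $\Omega^{n-1}(M_j)$ is non-projective, so $[\Omega^{n-1}(M_j)]\neq 0$ in the free group $K_0$. Hence the rank at step $n-1$ is at least $1>0$, and $\phi(M)=n$. The case $n=0$ is trivial.

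For (2), if $M$ is indecomposable then $\langle{\rm add}\,M\rangle=\mathbb{Z}[M]$ has rank $1$. Its image $\On^k$ is the cyclic group generated by $[\Omega^kM]$. Since $\pd M=\infty$, $\Omega^kM$ is never projective, so $[\Omega^kM]\neq0$ in the free group $K_0$ and the rank is $1$ for every $k$. Therefore $\phi(M)=0$.

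For (4), note that ${\rm add}(sM)={\rm add}\,M$, so $\langle{\rm add}\,sM\rangle=\langle{\rm add}\,M\rangle$ and the equality is immediate.

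Item (3) is the only step needing care. Put $A=\langle{\rm add}\,M\rangle\subseteq B=\langle{\rm add}(M\oplus N)\rangle$ and $n=\phi(M\oplus N)$. I will use the monomorphism formulation: $\On$ restricted to $\On^{n+s}(B)$ is injective for all $s\ge0$. Since $\On^{n+s}(A)\subseteq\On^{n+s}(B)$, the restriction of $\On$ to $\On^{n+s}(A)$ is also injective. Injectivity gives ${\rm rg}\,\On^{n+s}(A)={\rm rg}\,\On^{n+s+1}(A)$ for every $s$, so $\phi(M)\le n$.

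The subtle point is the equivalence between the rank formulation and the monomorphism formulation. It holds because a surjection between finitely generated free abelian groups of equal rank is injective: its kernel $K$ satisfies ${\rm rg}\,K=0$, and $K$ is a subgroup of a free group, hence torsion-free, so $K=0$. I expect this equivalence to be the main obstacle. The remaining steps are bookkeeping on ranks.
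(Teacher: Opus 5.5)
Your proof is correct. The paper gives no argument of its own and simply cites \cite{IT}. Your item-by-item verification from the definition is the standard argument: rank counting for (1), (2) and (4), and for (3) restricting the monomorphism $\mathbf{\Omega}$ from $\mathbf{\Omega}^{k}(\langle\mathrm{add}(M\oplus N)\rangle)$ to its subgroup $\mathbf{\Omega}^{k}(\langle\mathrm{add}\,M\rangle)$ for $k\geq\phi(M\oplus N)$. The only tacit point is that (4) needs $s\geq 1$, since $\mathrm{add}(0M)\neq\mathrm{add}\,M$; that is presumably the intended reading of $s\in\mathbb{N}$.
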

The \textbf{second Igusa-Todorov function or function $\psi$ of Igusa-Todorov} of $M\in\modu\Lambda$ is defined as $$\psi(M)=\phi(M)+{\rm max}\{\pd(N):\,N| \Omega^{\phi(M)}(M){\rm\, \ with\ \,} \pd(N)<\infty \},$$
where $N|L$ denotes that $N$ is a direct summand of $L$.

\begin{prop}\cite{IT}\label{propiedades de la psi} Let $M,N$ be in $\modu\Lambda$. Then:
\begin{enumerate}
\item If $\pd (M)<\infty$ then $\psi(M)=\pd (M)$.
\item $\psi(M)\leq \psi(M\oplus N)$.
\item $\psi(sM)=\psi(M)$ for all $s\in \mathbb{N}$.
\item If $N|\Omega^nM$ such that $n\leq \phi(M)$ and $\pd(N)<\infty$, then $\pd N+n\leq \psi(M)$.
\end{enumerate}
\end{prop}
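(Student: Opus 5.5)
The plan is to derive each of the four items directly from the definition of $\psi$ together with Proposition \ref{propiedades de phi}, along the lines of \cite{IT}.

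\textbf{Item 1.} Suppose $\pd(M)=n<\infty$. By Proposition \ref{propiedades de phi}(1) we have $\phi(M)=n$.
Then $\Omega^{n}(M)$ is projective, so every direct summand $N$ of $\Omega^n(M)$ has $\pd(N)=0$. With the convention that the maximum over the zero summand is $0$, this gives $\psi(M)=n+0=\pd(M)$.

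\textbf{Item 4.} This is the key inequality, and I would prove it before items 2 and 3. Let $N\mid\Omega^n M$ with $n\le\phi(M)$ and $\pd(N)=d<\infty$. Then $\Omega^{\phi(M)-n}(N)$ is a direct summand of $\Omega^{\phi(M)}(M)$, and its projective dimension is $\max\{d-(\phi(M)-n),0\}$.
\begin{itemize}
\item If $d\ge \phi(M)-n$, the definition of $\psi$ gives $\psi(M)\ge \phi(M)+d-\phi(M)+n=d+n$.
\item If $d<\phi(M)-n$, then trivially $d+n<\phi(M)\le\psi(M)$.
\end{itemize}
Some care is needed when $\Omega^{\phi(M)-n}(N)$ is projective and hence zero in $K_0$. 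The maximum in the definition still contains $0$, and the first case still applies.

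\textbf{Item 3.} Since $\langle{\rm add}\,sM\rangle=\langle{\rm add}\,M\rangle$, we get $\phi(sM)=\phi(M)$ by Proposition \ref{propiedades de phi}(4). Also $\Omega^{\phi}(sM)=s\,\Omega^{\phi}(M)$ has the same indecomposable summands as $\Omega^{\phi}(M)$, so the two maxima coincide and $\psi(sM)=\psi(M)$.

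\textbf{Item 2.} This is the main obstacle, because $\phi(M)\le\phi(M\oplus N)$ may be strict, so the summands are computed at different syzygy levels. Put $m=\phi(M)$ and $m'=\phi(M\oplus N)\ge m$.
\begin{itemize}
\item First, $\psi(M\oplus N)\ge m'\ge m$.
\item Next, let $L$ be an indecomposable summand of $\Omega^m(M)$ with $\pd(L)<\infty$. Then $L$ is also a summand of $\Omega^m(M\oplus N)$, and $m\le m'=\phi(M\oplus N)$.
\item Applying item 4 to $M\oplus N$ with $n=m$ gives $\pd(L)+m\le\psi(M\oplus N)$.
\end{itemize}
Taking the maximum over all such $L$ yields $\psi(M)\le\psi(M\oplus N)$. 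So the whole argument reduces to item 4, and the delicate point there is tracking finite projective dimension through the further syzygies.
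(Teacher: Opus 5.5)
Your proof is correct. The paper gives no proof of this proposition and simply quotes it from Igusa and Todorov. Your route is essentially the standard argument in that reference: you first prove item 4 by noting that $\Omega^{\phi(M)-n}(N)$ is a direct summand of $\Omega^{\phi(M)}(M)$ and splitting into the cases $\pd(N)+n\le\phi(M)$ and $\pd(N)+n>\phi(M)$. You then deduce item 2 by applying item 4 to $M\oplus N$ with $n=\phi(M)\le\phi(M\oplus N)$.
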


\begin{prop}\cite{HLM}\label{phi meno que phi de omega} For each module $M$ in $\modu\Lambda$, the following inequalities hold:
\begin{enumerate}
\item $\phi(M)\leq \phi(\Omega(M))+1$.
\item $\psi(M)\leq \psi(\Omega(M))+1$.
\end{enumerate}
\end{prop}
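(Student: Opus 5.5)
The plan is to compare the subgroups $\mathbf{\Omega}^{k}(\langle{\rm add}\,M\rangle)$ and $\mathbf{\Omega}^{k-1}(\langle{\rm add}\,\Omega M\rangle)$ of $K_0$, and then deduce both inequalities from the monomorphism characterization of $\phi$ and from Proposition \ref{propiedades de la psi}(4).

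\textbf{Step 1: an inclusion of subgroups.} If $X$ is an indecomposable direct summand of $M$, then $\Omega X$ is a direct summand of $\Omega M$. Hence $[\Omega X]$ is a sum of classes of indecomposable summands of $\Omega M$, and so
$$\mathbf{\Omega}(\langle{\rm add}\,M\rangle)\subseteq\langle{\rm add}\,\Omega M\rangle .$$
Applying the group morphism $\mathbf{\Omega}^{k}$ to this inclusion gives, for every $k\geq 0$,
$$\mathbf{\Omega}^{k+1}(\langle{\rm add}\,M\rangle)\subseteq \mathbf{\Omega}^{k}(\langle{\rm add}\,\Omega M\rangle).$$

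\textbf{Step 2: the inequality for $\phi$.} Let $n=\phi(\Omega M)$. By the monomorphism form of the definition, $\mathbf{\Omega}$ is injective on $\mathbf{\Omega}^{n+s}(\langle{\rm add}\,\Omega M\rangle)$ for every $s\in\N$. By Step 1 (with $k=n+s$), $\mathbf{\Omega}^{n+1+s}(\langle{\rm add}\,M\rangle)$ is a subgroup of $\mathbf{\Omega}^{n+s}(\langle{\rm add}\,\Omega M\rangle)$, and the restriction of a monomorphism is a monomorphism. So $\mathbf{\Omega}$ is injective on $\mathbf{\Omega}^{n+1+s}(\langle{\rm add}\,M\rangle)$ for all $s$. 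By minimality in the definition of $\phi(M)$, this gives $\phi(M)\leq n+1$, which is item 1.

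\textbf{Step 3: the inequality for $\psi$.} Put $m=\phi(M)$. If $\Omega^m M$ has no direct summand of finite projective dimension, then $\psi(M)=m$. In that case item 1 and the trivial bound $\phi\leq\psi$ give $\psi(M)\leq\phi(\Omega M)+1\leq\psi(\Omega M)+1$.

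Otherwise, let $N\mid\Omega^m M$ with $\pd N<\infty$; it suffices to show $m+\pd N\leq\psi(\Omega M)+1$. We split into two cases.
\begin{itemize}
\item If $m\geq 1$, then $N\mid\Omega^{m-1}(\Omega M)$, and $m-1\leq\phi(\Omega M)$ by item 1. Proposition \ref{propiedades de la psi}(4), applied to $\Omega M$, gives $\pd N+m-1\leq\psi(\Omega M)$, as required.
\item If $m=0$, then $N\mid M$. If $\pd N=0$ the bound is trivial, since $\psi(\Omega M)\geq 0$. If $\pd N\geq 1$, then $\Omega N\mid\Omega M$ and $\pd\Omega N=\pd N-1$. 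Proposition \ref{propiedades de la psi}(4) with $n=0$ gives $\pd N-1\leq\psi(\Omega M)$.
\end{itemize}
Taking the maximum over all such $N$ yields item 2.

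The only real point is the inclusion of Step 1, together with the observation that a monomorphism restricts to a monomorphism on subgroups; the rest is bookkeeping. The step needing the most care is the case $m=0$ in Step 3, where the summand $N$ must first be moved one syzygy down before Proposition \ref{propiedades de la psi}(4) applies to $\Omega M$.
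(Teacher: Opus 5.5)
Your proof is correct. The paper gives no argument of its own for this statement; it is quoted from \cite{HLM}. Your route is the standard one and yields a complete, self-contained proof:
\begin{itemize}
\item first the inclusion $\mathbf{\Omega}(\langle{\rm add}\,M\rangle)\subseteq\langle{\rm add}\,\Omega M\rangle$;
\item then restricting the monomorphism $\mathbf{\Omega}$ from $\mathbf{\Omega}^{n+s}(\langle{\rm add}\,\Omega M\rangle)$ to $\mathbf{\Omega}^{n+1+s}(\langle{\rm add}\,M\rangle)$;
\item finally applying Proposition \ref{propiedades de la psi}(4) to $\Omega M$.
\end{itemize}

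One small simplification is available. In Step 3 the subcase $m=0$, $\pd N\geq 1$ is vacuous. A summand $N$ of $M$ with $1\leq \pd N<\infty$ would force $\phi(M)\geq\phi(N)=\pd N\geq 1$, by Proposition \ref{propiedades de phi}(1) and (3). Your treatment of that subcase is still valid, so nothing needs fixing.
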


Finally, we recall that the $\phi$-dimension of $\Lambda$ and $\psi$-dimension of $\Lambda$ are defined as
$\phid (\Lambda)={\rm sup}\{\phi(M): M\in\modu\Lambda\}$ and $\psid (\Lambda)={\rm sup}\{\psi(M): M\in\modu\Lambda\}$, respectively.

\begin{prop}\label{prop desiguladades entre dim} Let $\Lambda$ be an Artin algebra.Then, the following inequalities hold:
$${\rm fin.dim}(\Lambda)\leq\phid(\Lambda)\leq \psid(\Lambda)\leq {\rm gl.dim}(\Lambda).$$
where fin.dim$(\Lambda)$ and gl.dim$( \Lambda)$ denotes the finitistic dimension and global dimension of $\Lambda$, respectively.
 
\end{prop}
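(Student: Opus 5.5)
We prove the three inequalities ${\rm fin.dim}(\Lambda)\leq\phid(\Lambda)$, $\phid(\Lambda)\leq\psid(\Lambda)$ and $\psid(\Lambda)\leq{\rm gl.dim}(\Lambda)$ one at a time, using only Propositions \ref{propiedades de phi} and \ref{propiedades de la psi}.

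\textbf{First inequality.} Take any $M\in\modu\Lambda$ with $\pd(M)<\infty$. By Proposition \ref{propiedades de phi}(1), $\pd(M)=\phi(M)\leq\phid(\Lambda)$. Taking the supremum over all such $M$ gives ${\rm fin.dim}(\Lambda)\leq\phid(\Lambda)$.

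\textbf{Second inequality.} For every $M$ we have $\psi(M)=\phi(M)+m$, where $m$ is the maximum of the projective dimensions of the direct summands $N$ of $\Omega^{\phi(M)}(M)$ with $\pd(N)<\infty$. We must check that $m\geq 0$, i.e. that the maximum is never taken over an empty family.
\begin{itemize}
\item If $\Omega^{\phi(M)}(M)$ has a direct summand of finite projective dimension, then clearly $m\geq 0$.
\item Otherwise, adopt the convention that the maximum of the empty set is $0$ (the zero module is a summand of projective dimension $0$, or one reads $\max\emptyset=0$).
\end{itemize}
In either case $\phi(M)\leq\psi(M)$, and taking suprema gives $\phid(\Lambda)\leq\psid(\Lambda)$.

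\textbf{Third inequality.} We may assume ${\rm gl.dim}(\Lambda)<\infty$, since otherwise there is nothing to prove. Then every module $M$ has $\pd(M)<\infty$. By Proposition \ref{propiedades de la psi}(1), $\psi(M)=\pd(M)\leq{\rm gl.dim}(\Lambda)$. Taking the supremum gives $\psid(\Lambda)\leq{\rm gl.dim}(\Lambda)$.

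\textbf{Main point to check.} No step is genuinely hard. The only detail needing care is the convention in the definition of $\psi$ when $\Omega^{\phi(M)}(M)$ has no summand of finite projective dimension, handled in the second inequality. Everything else is a direct application of the cited properties of $\phi$ and $\psi$, followed by taking suprema.
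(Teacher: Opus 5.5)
Your proof is correct and is the standard argument; the paper states this proposition without proof, relying on the cited Igusa--Todorov properties. Your three steps are exactly that argument: $\phi(M)=\pd(M)$ for modules of finite projective dimension gives the first inequality, $\phi(M)\leq\psi(M)$ by definition gives the second, and $\psi(M)=\pd(M)\leq{\rm gl.dim}(\Lambda)$ when the global dimension is finite gives the third. One cosmetic point: justify the nonnegativity of the added term in $\psi$ by the convention $\max\emptyset=0$ (the convention the paper itself uses in Proposition \ref{secita}), rather than by $\pd(0)=0$, since many authors set $\pd(0)=-\infty$.
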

\subsection{Morita context algebras}
Let $A$ and $B$ be two Artin algebras, a Morita context is a 6-tuple $\mathcal{M}=(A,M,N,B,\alpha, \beta)$ where  $_BM_A$ is a $B$-$A$-bimodule, $_AN_B$ is an $A$-$B$-bimodule and the maps $\beta: M\otimes_AN\rightarrow B$, $\alpha: N\otimes_BM\rightarrow A$ are $B$-$B$ and \linebreak $A$-$A$-bimodule homomorphisms respectively, satisfying the following associativity conditions, $\forall m,m' \in M,$ $\forall n,n'\in N$:
$$\beta(m\otimes n)m'=m\alpha(n\otimes m') \mbox{ and } n\beta(m\otimes n')=\alpha(n\otimes m)n'.$$

From the Morita context $\mathcal{M}=(A,M,N,B,\alpha,\beta)$ we define the Morita ring
 $$\Lambda_{(\alpha,\beta)}=\begin{pmatrix} A & _AN_B \\
_BM_A & B\end{pmatrix}$$
where the addition is the usual one  and the multiplication is given by
$$\begin{pmatrix}a&n\\m&b\end{pmatrix}.\begin{pmatrix}a'&n'\\m'&b'\end{pmatrix}=\begin{pmatrix}aa'+\alpha(n\otimes m')&an'+nb'\\ ma'+bm'&bb'+\beta(m\otimes n')\end{pmatrix}.$$

Now we recall the description of modules on Morita rings $\Lambda_{(\alpha, \beta)}$. Any left $\Lambda_{(\alpha, \beta)}$-module can be represented by a 4-tupla $(X,Y,f,g)$ where $X\in \modu A$, $Y\in \modu B$, $f\in \rm{Hom}_B(M\otimes_A X,Y)$ and $g\in \rm{Hom}_A(N\otimes_B Y,X)$ such that the following
diagrams commute: 
$$\xymatrix@=13mm{N\otimes_BM\otimes_AX\ar[d]_{\alpha\otimes Id_X}\ar[r]^{\hspace{0.2 cm}Id_N\otimes f}&N\otimes_B Y\ar[d]^g\\ A\otimes_A X\ar[r]^{\simeq}&X}\hspace{.5 cm}\xymatrix@=13mm{M\otimes_AN\otimes_BY\ar[d]_{\beta\otimes Id_Y}\ar[r]^{\hspace{0.2 cm}Id_M\otimes g}&M\otimes_A X\ar[d]^f\\ B\otimes_B Y\ar[r]^{\simeq}&Y}.$$
We denote $\alpha_X$ and $\beta_Y$ the compositions of the following morphisms:
$$\alpha_X:\xymatrix@=13mm{N\otimes_BM\otimes_AX\ar[r]^{\hspace{0.2 cm}\alpha\otimes Id_X}&A\otimes_A X\ar[r]^{\simeq}&X},\ \beta_Y:\xymatrix@=13mm{M\otimes_AN\otimes_BY\ar[r]^{\hspace{0.2 cm}\beta\otimes Id_Y}&B\otimes_B Y\ar[r]^{\simeq}&Y.}$$

A  $\Lambda_{(\alpha,\beta)}$-morphism $(X,Y,f,g)\rightarrow (X',Y',f',g')$ is a pair $(a,b)$ of morphisms, with $a\in \rm{Hom}_A(X,X')$ and $b\in \rm{Hom}_B(Y,Y')$ such that the following diagrams are commutative:
$$\xymatrix@=13mm{M\otimes_A X\ar[r]^f\ar[d]_{Id_M\otimes a}&Y\ar[d]^b\\M\otimes_A X'\ar[r]^{f'}&Y'} \hspace{.5 cm} \xymatrix@=13mm{N\otimes_B Y\ar[r]^g\ar[d]_{Id_N\otimes b}&X\ar[d]^a\\N\otimes_B Y'\ar[r]^{g'}&X'}.$$ 

Let us observe that a $\Lambda_{(\alpha,\beta)}$-module is an indecomposable projective $\Lambda_{(\alpha,\beta)}$-module if and only if it has the form $(P,M\otimes_AP,Id_{M\otimes P}, \alpha_P)$ or the form $(N\otimes_BQ,Q, \beta_Q,Id_{N\otimes Q})$ where $P$ and $Q$ are indecomposable projective $A$-module and $B$-module, respectively. Observe that the  $\Lambda_{(0,0)}$ indecomposable projective modules are represented as  $(P,M\otimes_AP,Id_{M\otimes P}, 0)$ or $(N\otimes_BQ,Q, 0,Id_{N\otimes Q})$ with $P$ and $Q$ as before.

Finally, let us recall the following functors:

\begin{enumerate}
    \item The functor $T_A:Mod A\rightarrow Mod\Lambda$ is defined by $T_A(X)=(X,M\otimes_A X,Id_{M\otimes_A X}, \alpha_X )$ on the objects and if $a:X\rightarrow X'$ is a $A$ -morphism then $T_A(a)=(a,M\otimes a)$.
    \item The functor $U_A:Mod \Lambda\rightarrow ModA$ is defined by $U_A((X,Y,f,g))=X$  on the objects and if       \linebreak    $(a,b):(X,Y,f,g)\rightarrow (X',Y',f',g')$ is a $\Lambda$-morphism then $U_A((a,b))=a$.
    \item The functor  $T_B:Mod B\rightarrow Mod\Lambda$ is defined by $T_B(Y)=(N\otimes_B Y,Y,\beta_Y,Id_{N\otimes_B Y})$  on the objects and if $b:Y\rightarrow Y'$ is a $B$ -morphism then  $T_B(b)=(N\otimes b,b)$.

    \item The functor $U_B:Mod \Lambda\rightarrow ModB$ is defined by $U_B((X,Y,f,g))=Y$  on the objects and if       \linebreak    $(a,b):(X,Y,f,g)\rightarrow (X',Y',f',g')$ is a $\Lambda$-morphism then $U_B((a,b))=b$.
 
\end{enumerate}
\begin{prop}\cite{GP} The pairs $(T_A,U_A)$ and $(T_B,U_B)$ are adjoint pairs of functors.
\end{prop}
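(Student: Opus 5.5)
We must show that $(T_A,U_A)$ and $(T_B,U_B)$ are adjoint pairs, with $T_A$ left adjoint to $U_A$. For each $X\in{\rm Mod}\,A$ and $(X',Y',f',g')\in{\rm Mod}\,\Lambda$ we want a bijection
$$\Phi:{\rm Hom}_\Lambda\big(T_A(X),(X',Y',f',g')\big)\longrightarrow {\rm Hom}_A(X,X'),\qquad \Phi(a,b)=a,$$
natural in both variables. The case of $(T_B,U_B)$ is symmetric, obtained by exchanging the roles of $(A,M,\alpha)$ and $(B,N,\beta)$.

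\emph{Injectivity.} A $\Lambda$-morphism $(a,b):(X,M\otimes_AX,{\rm Id},\alpha_X)\to(X',Y',f',g')$ must make the first square commute. Since $f={\rm Id}_{M\otimes_A X}$, this square reads $b=f'\circ({\rm Id}_M\otimes a)$. Hence $b$ is determined by $a$, and $\Phi$ is injective.

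\emph{Surjectivity.} Given $a\in{\rm Hom}_A(X,X')$, set $b:=f'\circ({\rm Id}_M\otimes a)$, a $B$-morphism. The first square commutes by construction. The second square requires
$$a\circ\alpha_X=g'\circ({\rm Id}_N\otimes b)=g'\circ({\rm Id}_N\otimes f')\circ({\rm Id}_{N\otimes M}\otimes a).$$
By the compatibility diagram for the module $(X',Y',f',g')$ we have $g'\circ({\rm Id}_N\otimes f')=\alpha_{X'}$. So the right-hand side equals $\alpha_{X'}\circ({\rm Id}_{N\otimes M}\otimes a)$, and this equals $a\circ\alpha_X$ by naturality of $\alpha_{(-)}$. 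That naturality holds because $\alpha\otimes{\rm Id}$ is natural in the right tensor factor and the isomorphism $A\otimes_A-\cong{\rm Id}$ is natural. Thus $(a,b)$ is a $\Lambda$-morphism mapped to $a$.

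We should also confirm that $T_A(X)$ is a genuine $\Lambda$-module, i.e. that its two compatibility diagrams commute. The first holds trivially because $f={\rm Id}$ and $g=\alpha_X$. The second, $\beta_{M\otimes X}={\rm Id}_M\otimes\alpha_X$, is exactly the associativity condition $\beta(m\otimes n)m'=m\,\alpha(n\otimes m')$ applied elementwise.

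\emph{Naturality.} Naturality in $X$ follows since $T_A(c)=(c,M\otimes c)$, so $\Phi((a,b)\circ T_A(c))=a\circ c$. Naturality in the $\Lambda$-module variable is immediate because $U_A$ just takes the first component.

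The only nontrivial step is verifying the second commuting square, which rests on the module axiom $g'({\rm Id}\otimes f')=\alpha_{X'}$ together with naturality of $\alpha$; everything else is bookkeeping. Equivalently, one can record the unit $X\to U_AT_A(X)$ as the identity and the counit $T_AU_A(X',Y',f',g')\to(X',Y',f',g')$ as $({\rm Id},f')$, and check the triangle identities.
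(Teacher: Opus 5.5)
Your proof is correct and complete. The paper gives no argument of its own for this statement; it is simply quoted from \cite{GP}. Your direct verification covers everything needed. The first square forces $b=f'\circ(\mathrm{Id}_M\otimes a)$, which gives injectivity of $(a,b)\mapsto a$. The second square then reduces to the module axiom $g'\circ(\mathrm{Id}_N\otimes f')=\alpha_{X'}$ together with $A$-linearity of $a$. Finally, $T_A(X)$ is a genuine $\Lambda$-module because $\beta_{M\otimes_A X}=\mathrm{Id}_M\otimes\alpha_X$ is exactly the associativity axiom $\beta(m\otimes n)m'=m\,\alpha(n\otimes m')$.

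The case $(T_B,U_B)$ is indeed symmetric. There the second square forces $a=g'\circ(\mathrm{Id}_N\otimes b)$. Well-definedness of $T_B(Y)$ uses the other axiom, $n\,\beta(m\otimes n')=\alpha(n\otimes m)\,n'$.

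For comparison, a more conceptual route goes through the idempotent $e=\begin{pmatrix}1&0\\0&0\end{pmatrix}$. One has $e\Lambda e\cong A$, $T_A\cong\Lambda e\otimes_A-$ and $U_A\cong\mathrm{Hom}_\Lambda(\Lambda e,-)$, so the adjunction is just tensor--Hom adjunction. This also yields the right adjoint $\mathrm{Hom}_A(e\Lambda,-)$ of $U_A$ for free. That version pushes the bookkeeping into identifying $\Lambda e\otimes_A X$ with the quadruple $(X,M\otimes_AX,\mathrm{Id},\alpha_X)$.

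Your version stays in the quadruple language the paper uses and makes the natural bijection $(a,b)\mapsto a$ explicit. That is the form in which the adjunction is invoked later, when $\mathrm{Hom}_\Lambda(T_A(P_i),(P,0,0,0))$ is identified with $\mathrm{Hom}_A(P_i,P)$ in the Morita context section.
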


\section{Locally Gorenstein modules: definitions and basic properties}
\subsection{Locally Gorenstein modules: definitions and basic properties}
\begin{defn}\label{1}
Let $X$ be a finitely generated $\Lambda$-module and $\mathbf{P}_{\Lambda} X=\{P_1,...,P_r\}$ the set of all indecomposable projective modules that are direct summands of projective modules at the minimal projective resolution of $X$. We say that $X$ is a {\bf locally Gorenstein module} if $\Em (X,P_j)=0$ for all $P_j\in \mathbf{P}_{\Lambda} X$ and for all $m\geq 1$. 
\end{defn}

\begin{ejes}\label{2}

\begin{enumerate}
\item
Let $\Lambda$ be an Artin algebra. Then:
\begin{enumerate}
\item All projective modules are locally Gorenstein modules. 

\item Let $\mathcal{P}$ be the class of all projective $\Lambda$-modules with finite injective dimension, \linebreak$n=\max_{P_i\in\mathcal{P}}\{\id (P_i)\}$ and $X$ a $\Lambda$-module with infinite projective dimension such that the class $\mathbf{P}_{\Lambda} \Omega^n(X)\subseteq \mathcal{P}$. Then $\Omega^m(X)$ is a locally Gorenstein module, for all $m\geq n$.

\end{enumerate}

\item Consider the following radical square zero algebra $\Lambda=kQ/F^2$ where $Q$ is defined as follows:
$$\xymatrix{1\ar[r]&2\ar[r]&3\ar[r]&4\ar@(ur,dr)}$$

The simple $S_4$ is a locally Gorenstein module.

\end{enumerate}
\end{ejes}

\begin{prop}
Let $\Lambda$ be an Artin algebra and $M$ a $\Lambda$-module. Then, $M$ is locally Gorenstein if and only if,
Hom$_\Lambda(-,P_j)$, with $P_j$ in $\mathbf{P}_{\Lambda} M$, is exact on any projective resolution of $M$. 

\end{prop}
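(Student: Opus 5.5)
The statement is the local analogue of Remark~\ref{gproj son estables}(1), and I will prove it the same way, with $\Lambda$ replaced by each $P_j$. Fix an arbitrary projective resolution
$$\xymatrix{\cdots\ar[r]^{q_2}&Q_1\ar[r]^{q_1}&Q_0\ar[r]^{q_0}&M\ar[r]&0}$$
of $M$ and an indecomposable $P_j\in\mathbf{P}_{\Lambda}M$. Saying that ${\rm Hom}_\Lambda(-,P_j)$ is exact on it means that the augmented complex
$$0\to {\rm Hom}_\Lambda(M,P_j)\xrightarrow{q_0^*}{\rm Hom}_\Lambda(Q_0,P_j)\xrightarrow{q_1^*}{\rm Hom}_\Lambda(Q_1,P_j)\xrightarrow{q_2^*}\cdots$$
is exact.

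The key steps, in order, are as follows.

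\begin{enumerate}
\item Note that exactness at ${\rm Hom}_\Lambda(M,P_j)$ and at ${\rm Hom}_\Lambda(Q_0,P_j)$ is automatic, by left exactness of ${\rm Hom}_\Lambda(-,P_j)$ applied to $Q_1\to Q_0\to M\to 0$.
\item Observe that for $m\geq 1$ the cohomology of the deleted complex ${\rm Hom}_\Lambda(Q_\bullet,P_j)$ at ${\rm Hom}_\Lambda(Q_m,P_j)$ is, by definition of Ext computed via projective resolutions, $\Ker(q_{m+1}^*)/{\rm Im}(q_m^*)\cong{\rm Ext}^m_\Lambda(M,P_j)$.
\item Remark that this identification is independent of the chosen resolution, since any two projective resolutions are homotopy equivalent and ${\rm Hom}_\Lambda(-,P_j)$ preserves homotopy equivalences.
\item Conclude that the augmented complex is exact for the given resolution if and only if ${\rm Ext}^m_\Lambda(M,P_j)=0$ for all $m\geq 1$.
\item Letting $P_j$ range over $\mathbf{P}_{\Lambda}M$, the right-hand condition is exactly Definition~\ref{1}, which gives both implications simultaneously.
\end{enumerate}

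The only delicate point is the phrase ``any projective resolution''. The set $\mathbf{P}_{\Lambda}M$ is defined using the \emph{minimal} projective resolution, but the resolution on which exactness is tested may be arbitrary, possibly containing other indecomposable projective summands. This causes no trouble: the set of test modules $P_j$ is fixed in advance from the minimal resolution and does not depend on the resolution used to compute Ext, which is exactly what step~3 guarantees. So the proof reduces to the standard homological fact, and I expect no real obstacle beyond stating this independence clearly.
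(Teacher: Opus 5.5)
Your proof is correct and follows essentially the argument the paper has in mind. The paper states this proposition without a separate proof, as the direct analogue of Remark~\ref{gproj son estables}(1): exactness of the augmented complex obtained by applying ${\rm Hom}_\Lambda(-,P_j)$ to a projective resolution is automatic in low degrees, and in higher degrees it is equivalent to ${\rm Ext}^m_\Lambda(M,P_j)=0$, independently of the resolution chosen, which is the condition in Definition~\ref{1}. Your explicit treatment of ``any projective resolution'' versus the minimal resolution used to define $\mathbf{P}_{\Lambda}M$ is a useful clarification that the paper leaves implicit.
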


\begin{cor} Let $\Lambda$ be an Artin algebra, then the following inclusions hold:
$$\Lambda-{\rm Gproj}\ \subseteq\ ^\perp \Lambda\ \subseteq \{{\rm locally\ Gorenstein\ modules }\}.$$
\end{cor}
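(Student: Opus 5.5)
The first inclusion $\Lambda\text{-Gproj}\subseteq{}^\perp\Lambda$ is already proved in Remark \ref{gproj son estables}(2). There, the totally acyclic complex $(\mathcal{P},p)$ with $M\cong\Ker(p_0)$ is used to compute $\Ext^i_\Lambda(M,\Lambda)$ as the homology of the exact dual complex, so it vanishes for all $i\geq 1$. So I would simply cite that item and concentrate on the second inclusion.

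For ${}^\perp\Lambda\subseteq\{\text{locally Gorenstein modules}\}$, take $X\in{}^\perp\Lambda$, so $\Em(X,\Lambda)=0$ for all $m\geq 1$. I must show that $\Em(X,P_j)=0$ for every $P_j\in\mathbf{P}_\Lambda X$ and every $m\geq1$. In fact I will show the stronger statement that $\Em(X,P)=0$ for every projective $\Lambda$-module $P$.

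The argument has two steps. First, $\Em(X,-)$ commutes with finite direct sums in the second variable, so $\Em(X,\Lambda^t)\cong\Em(X,\Lambda)^t=0$ for all $t\geq 1$. Second, every finitely generated projective $P$ is a direct summand of some free module $\Lambda^t$, say $\Lambda^t\cong P\oplus P'$. Then $\Em(X,P)$ is a direct summand of $\Em(X,\Lambda^t)=0$, hence it is zero. Applying this to each indecomposable $P_j\in\mathbf{P}_\Lambda X$ gives the condition of Definition \ref{1}.

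Alternatively, one could use the preceding Proposition together with Remark \ref{gproj son estables}(1). Exactness of $\Ho(-,\Lambda)$ on a projective resolution of $X$ implies exactness of $\Ho(-,\Lambda^t)$, by additivity. It then passes to the direct summand $\Ho(-,P_j)$, since a direct summand of an exact complex is exact. There is no real obstacle here; the only point needing care is this additivity and summand argument, which is routine.
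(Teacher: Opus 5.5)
Your proof is correct and is essentially the argument the paper leaves implicit. The paper states this corollary without proof, right after the remark showing $\Lambda$-Gproj$\subseteq{}^\perp\Lambda$ and the proposition characterizing locally Gorenstein modules by exactness of $\Ho(-,P_j)$ on projective resolutions; your additivity-and-direct-summand argument (in either the $\Ext$ or the $\Ho$-exactness form) is exactly the routine verification that connects them.
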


\begin{prop}\label{propiedades de módulos ortogonales a su resol} Let $\Lambda$ be an Artin algebra and $M$ a locally Gorenstein module. Then the following statements hold:
\begin{enumerate}
\item if $M$ is non-projective then pd$(M)=\infty$,
\item each direct summand of $M$ is a locally Gorenstein module,
\item $\Omega^n(M)$ is a locally Gorenstein module, for all non-negative integer $n$.
\end{enumerate}
\end{prop}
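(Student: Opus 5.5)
The plan is to take the minimal projective resolution
$$\cdots\to Q_2\xrightarrow{q_2} Q_1\xrightarrow{q_1} Q_0\xrightarrow{q_0} M\to 0$$
and use it for all three items. Every indecomposable summand of each $Q_i$ lies in $\mathbf{P}_\Lambda M$, and $\Ext^m_\Lambda(M,-)$ is additive in the second variable. So the hypothesis gives $\Ext^m_\Lambda(M,Q_i)=0$ for all $m\geq 1$ and all $i\geq 0$.

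\emph{Item 1.} Suppose $M$ is non-projective and $\pd(M)=n<\infty$, so $n\geq 1$. By minimality the resolution ends with $0\to Q_n\xrightarrow{q_n} Q_{n-1}$, where $Q_n\neq 0$ and $q_n$ is a non-split monomorphism (its image lies in the radical of $Q_{n-1}$). The standard fact is that, since $\pd(M)=n$, there is an epimorphism $\Ext^n_\Lambda(M,Q_n)\to\Ext^n_\Lambda(M,Q_n)$ coming from $\Hom_\Lambda(Q_n,Q_n)\to\Ext^n_\Lambda(M,Q_n)$. More concretely, $\Ext^n_\Lambda(M,Q_n)=\mathrm{Coker}(\Hom(q_n,Q_n))$. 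The class of $\mathrm{id}_{Q_n}$ in this cokernel is zero exactly when $\mathrm{id}_{Q_n}$ factors through $q_n$, that is, when $q_n$ splits. Minimality rules this out, so $\Ext^n_\Lambda(M,Q_n)\neq 0$. This contradicts the hypothesis, since $Q_n$ is a sum of modules in $\mathbf{P}_\Lambda M$. I expect this to be the main (though mild) obstacle: it needs the identification of $\Ext^n$ as a cokernel and the use of minimality to forbid splitting.

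\emph{Item 2.} Write $M=M'\oplus M''$. The minimal projective resolution of $M$ is the direct sum of the minimal resolutions of $M'$ and $M''$, so $\mathbf{P}_\Lambda M'\subseteq \mathbf{P}_\Lambda M$. Moreover $\Ext^m_\Lambda(M',P)$ is a direct summand of $\Ext^m_\Lambda(M,P)$. Hence $\Ext^m_\Lambda(M',P)=0$ for every $P\in\mathbf{P}_\Lambda M'$ and every $m\geq 1$.

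\emph{Item 3.} By induction it suffices to treat $n=1$. The truncation $\cdots\to Q_2\to Q_1\to\Omega(M)\to 0$ is the minimal projective resolution of $\Omega(M)$, so $\mathbf{P}_\Lambda\Omega(M)\subseteq\mathbf{P}_\Lambda M$. Dimension shifting along $0\to\Omega M\to Q_0\to M\to 0$ gives $\Ext^m_\Lambda(\Omega M,P)\cong\Ext^{m+1}_\Lambda(M,P)$ for $m\geq 1$, which vanishes for $P\in\mathbf{P}_\Lambda M$. This completes the step.
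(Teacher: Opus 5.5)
Your proof is correct and follows the paper's argument item by item. For (2) the paper also uses $\mathbf{P}_\Lambda M'\subseteq\mathbf{P}_\Lambda M$ together with additivity of $\Ext$. For (3) it uses $\mathbf{P}_\Lambda\Omega^n(M)\subseteq\mathbf{P}_\Lambda M$ with dimension shifting. For (1) the paper only asserts that $\Ext^n_\Lambda(M,P)\neq 0$ for some $P\in\mathbf{P}_\Lambda M$, and your cokernel-plus-minimality (non-split $q_n$) argument supplies exactly that justification. One sentence is garbled: the epimorphism you mean is $\Ho(Q_n,Q_n)\to\Ext^n_\Lambda(M,Q_n)$, not $\Ext^n_\Lambda(M,Q_n)\to\Ext^n_\Lambda(M,Q_n)$; your concrete cokernel description that follows is right.
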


\begin{proof}
\begin{enumerate}
\item If pd$(M)=n<\infty$ then Ext$_\Lambda^n(M, P)\neq 0$, for some $P\in \mathbf{P}_{\Lambda} M$, which is a contradiction.
\item Let $M'$ be a direct summand of $M$. The result follows from the fact that $\mathbf{P}_{\Lambda} M'\subseteq \mathbf{P}_{\Lambda} M$ and from the additivity of the bifunctor Ext$_\Lambda^i(-,-)$. 
\item It follows from $\mathbf{P}_{\Lambda} \Omega^n(M)\subseteq \mathbf{P}_{\Lambda} M$ and the isomorphisms ${\rm Ext}_\Lambda^{i}(\Omega^n(M),-)={\rm Ext}_\Lambda^{i+n}(M,-)$, $i\geq 1$. 
\end{enumerate}
\end{proof}

\begin{rem} The direct sum of locally Gorenstein modules is not necessarily a locally Gorenstein module. In fact, in Example \ref{2}.2 the modules $P_3$ and $S_4$ are locally Gorenstein modules, but $P_3\oplus S_4$ is not a locally Gorenstein module. However, if $X$ and $Y$ are locally Gorenstein modules such that $ \mathbf{P}_{\Lambda} X = \mathbf{P}_{\Lambda} Y $ then $ X \oplus Y $ is a locally Gorenstein module.   
\end{rem}

In the next session we will demonstrate that both Igusa-Todorov functions $\phi$ and $\psi$ vanish on the family of locally Gorenstein modules. Previously we need the following result. 
\begin{prop}\label{secita}  Let $\Lambda$ be an Artin algebra and $M$ a $\Lambda$-module. If $M$ is a locally Gorenstein module then $\phi(M)=\psi(M)$. 
\end{prop}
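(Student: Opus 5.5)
By the definition of $\psi$, $\psi(M)=\phi(M)+\max\{\pd(N): N\mid \Omega^{\phi(M)}(M),\ \pd(N)<\infty\}$. So it suffices to show that every direct summand $N$ of $\Omega^{\phi(M)}(M)$ with finite projective dimension is projective. Then the maximum is $0$, with the convention that it is $0$ when the only such summands are projective (including the zero module). Hence $\psi(M)=\phi(M)$.

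The argument uses only Proposition \ref{propiedades de módulos ortogonales a su resol}. Write $n=\phi(M)$. By item 3 of that proposition, $\Omega^{n}(M)$ is a locally Gorenstein module. By item 2, every direct summand $N$ of $\Omega^n(M)$ is also locally Gorenstein. By item 1, a non-projective locally Gorenstein module has infinite projective dimension. So if $N\mid \Omega^n(M)$ and $\pd(N)<\infty$, then $N$ must be projective, i.e.\ $\pd(N)=0$.

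It remains to check the edge cases of the definition. If $\Omega^n(M)$ has no summand of finite projective dimension other than projective ones (or $\Omega^n(M)=0$), the maximum is taken to be $0$. If $M$ itself is projective, then $\pd(M)=0$, and Proposition \ref{propiedades de la psi}(1) together with Proposition \ref{propiedades de phi}(1) gives $\phi(M)=\psi(M)=0$ directly. The only delicate point is this bookkeeping convention for the maximum over an empty or trivial set; everything else is a direct application of the earlier proposition.
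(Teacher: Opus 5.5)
Your proposal is correct and follows the same route as the paper: set $n=\phi(M)$, use items 3 and 2 of the basic-properties proposition to see that every direct summand of $\Omega^n(M)$ is locally Gorenstein, then item 1 to conclude that those of finite projective dimension are projective, so the maximum in the definition of $\psi(M)$ is $0$. Your remarks on the edge cases (projective $M$, the convention for the maximum) are harmless bookkeeping that the paper leaves implicit.
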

\begin{proof}
Suppose $\phi(M)=n$. Since $M$ is a locally Gorenstein module then $\Omega^n(M)$ is a locally Gorenstein module, and then any direct summand  of $\Omega^n(M)$ is a locally Gorenstein module. So each direct summand  of $\Omega^n(M)$ is projective or has an infinite projective dimension.\\
Therefore $\max\{\pd(N)\ /\ N| \Omega^n(M) \mbox{ with }\pd(N)<\infty\}=0$ and $\psi(M)=\phi(M)$.   
\end{proof}

\section{Igusa\ -\ Todorov functions vanish on locally Gorenstein modules}

Let $X$ be a locally Gorenstein $\Lambda$-module and $\mathbf{P}_{\Lambda} X=\{P_1,\,P_2,\,...,\,P_r\}$ the set of all indecomposable projective direct summands of projective modules at one minimal projective resolution of $X$. We will consider a full subcategory of mod $\Lambda$ determined by ${\rm add}((\bigoplus_{i\geq 0}\Omega^i(X))\oplus (\bigoplus_{j=1}^rP_j))$, we will denote this subcategory with $\chi_{_X}$. We will also denote with $\underline{\chi}_X$, or simply $\underline{\chi}$, the subcategory stable by projectives in ${\rm add}(\mathbf{P}_{\Lambda} X)$, that is: 

\begin{itemize}
\item The objects of $\underline{\chi}_X$ are the objects of $\chi_{_X}$.
\item ${\rm Hom}_{\underline{\chi}_X}(M,N)=\frac{{\rm Hom}_{\chi_{_X}}(M,N)}{P_{\chi_{_X}}(M,N)}$, where $P_{\chi_{_X}}(M,N)$ is the set of morphisms from $M$ to $N$ that factor through a projective in ${\rm add}(\mathbf{P}_{\Lambda} X)$. 
\item The composition is that induced by $\chi_{_X}$.
\end{itemize}

\begin{lem} Let $\underline{f}:M\rightarrow N$ be a morphisms in \chss. Then $\underline{f}=\underline{0}$ in \chs if and only if $\underline{f}=\underline{0}$ in $\underline{\modu\,}\Lambda$. In particular, \chs is a full subcategory of $\underline{\modu}\,\Lambda$.
\end{lem}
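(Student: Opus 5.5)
One direction is immediate. If $f$ factors through a projective in ${\rm add}(\mathbf{P}_{\Lambda} X)$, then it factors through a projective $\Lambda$-module, so $\underline{f}=\underline{0}$ in $\underline{\modu}\,\Lambda$.

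For the converse, suppose $f:M\to N$ with $M,N\in\chi_X$ factors through some projective $Q$, say $f=hg$ with $g:M\to Q$ and $h:Q\to N$. It is enough to show that $f$ factors through the projective cover $\pi:P_0(N)\to N$. Since $N\in{\rm add}((\bigoplus_{i\ge0}\Omega^i X)\oplus\bigoplus_j P_j)$, the indecomposable summands of $P_0(N)$ are summands of terms of the minimal projective resolution of $X$, or are some $P_j$ themselves. Hence $P_0(N)\in{\rm add}(\mathbf{P}_{\Lambda}X)$. Because $\pi$ is an epimorphism and $Q$ is projective, $h$ lifts to $h':Q\to P_0(N)$ with $\pi h'=h$. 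Then $f=\pi(h'g)$ factors through $P_0(N)\in{\rm add}(\mathbf{P}_{\Lambda}X)$, so $\underline{f}=\underline{0}$ in $\underline{\chi}_X$.

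The one point needing care is the claim $P_0(N)\in{\rm add}(\mathbf{P}_{\Lambda}X)$. I would argue this via summands. Write $N=\bigoplus N_k$ with each $N_k$ indecomposable, each being either a summand of some $\Omega^i X$ or some $P_j$.

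\begin{itemize}
\item If $N_k$ is one of the $P_j$, then $P_0(N_k)=P_j$.
\item If $N_k$ is a summand of $\Omega^iX$, then $P_0(N_k)$ is a summand of $P_0(\Omega^iX)$, which is the $i$-th term of the minimal projective resolution of $X$. Its indecomposable summands therefore lie in $\mathbf{P}_{\Lambda}X$.
\end{itemize}

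Since the projective cover of a direct sum is the direct sum of the projective covers, the claim follows. Note that the locally Gorenstein hypothesis is not needed for this lemma.

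Fullness is then formal. The morphisms of $\underline{\chi}_X$ are ${\rm Hom}_\Lambda(M,N)$ modulo a subgroup, and by the first part that subgroup coincides with the one defining $\underline{\modu}\,\Lambda$. So ${\rm Hom}_{\underline{\chi}_X}(M,N)={\rm Hom}_{\underline{\modu}\Lambda}(M,N)$, and composition agrees with that of $\underline{\modu}\,\Lambda$. Hence $\underline{\chi}_X$ is a full subcategory of $\underline{\modu}\,\Lambda$.
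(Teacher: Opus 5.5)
Your proof is correct and takes the same approach as the paper: lift the second factor through the projective cover $\pi_N\colon P_0(N)\to N$, so that $f$ factors through $P_0(N)$. Your explicit check that $P_0(N)\in{\rm add}(\mathbf{P}_{\Lambda}X)$ (each $P_0(\Omega^iX)$ is the $i$-th term of the minimal resolution) fills in a point the paper only asserts, and you are right that the locally Gorenstein hypothesis is not used here.
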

\begin{proof}
If $\underline{f}=\underline{0}$ in \chs then $\underline{f}=\underline{0}$ in $\underline{\modu}\,\Lambda$.

\noindent Conversely, if $\underline{f}=\underline{0}$ in $\underline{\modu}\,\Lambda$ then there exist a projective module  $P$ in $\modu\,\Lambda$ and two morphisms $h:M\rightarrow P$ and $j:P\rightarrow N$ such that $f=jh$. Let's see
that $f$ factors through a projective module in \chss. Indeed, consider the following commutative diagram in $\modu\,\Lambda$
$$\xymatrix{
 & & & & M\ar[dl]^{h} \ar[dd]^{f}& \\&&&P\ar@{-->}[dl]^u\ar[rd]^{j}&& \\
0\ar[r]&\Omega(N)\ar[r]^{\iota_N}&\PN\ar[rr]^{\pi_N}&&N\ar[r]&0}$$

\noindent Since $\pi_N$ is an epimorphism and $P$ is a projective module, there is  $u:P\rightarrow \PN$ such that $j=\pi_N u$. Then $f=jh=\pi_Nuh$. Now, taking $uh:M\rightarrow \PN$ we have that $f$ factors through $\PN$, with  $M,\ N,\ \PN$ objects in $\chi_{_X}$ and  $uh$, $\pi_N $, morphisms in $\chi_{_X}$. 
\end{proof}

As a consequence of the proof of the last Lemma, we obtain:

\begin{rem}\label{obs 2.1}
Let $\underline{f}:M\rightarrow N$ be a morphisms in \chs. Then $\underline{f}=\underline{0}$ in \chs if and only if  $f:M\rightarrow N$ factors through $\PN$.
\end{rem}

\begin{lem}\label{lem A}
Let $X$ be a locally Gorenstein $\Lambda$-module. If $M\in $ \chs, then $\Em(M,P)=0$ $\,\forall \,m\geq 1$, $\forall P\in \mathbf{P}_{\Lambda} X$. 
\end{lem}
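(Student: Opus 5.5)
The plan is to reduce to the indecomposable summands of objects of $\chi_{_X}$ and use additivity of $\Em(-,P)$ in the first variable. By definition, every $M\in\underline{\chi}_X$ is, up to isomorphism, a direct sum of indecomposable modules. Each of these summands is either a direct summand of some $\Omega^i(X)$ with $i\geq 0$, or one of the projectives $P_j\in\mathbf{P}_{\Lambda} X$. Since $\Em(-,P)$ is additive and commutes with finite direct sums, it suffices to show that $\Em(Z,P)=0$ for all $m\geq 1$ and all $P\in\mathbf{P}_{\Lambda} X$, where $Z$ is either such a projective or a direct summand of some $\Omega^i(X)$.

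The projective case is immediate, since $\Em(P_j,P)=0$ for all $m\geq 1$. For a summand of a syzygy, it is enough to treat $Z=\Omega^i(X)$ itself, because $\Em(Z',P)$ is a direct summand of $\Em(Z,P)$ whenever $Z'\mid Z$.

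For $Z=\Omega^i(X)$ I use dimension shifting along the minimal projective resolution of $X$. This gives $\Em(\Omega^i(X),P)\cong \Ext^{m+i}_\Lambda(X,P)$ for $m\geq 1$, exactly as in the proof of Proposition \ref{propiedades de módulos ortogonales a su resol}(3). The right-hand side vanishes because $X$ is locally Gorenstein and $P\in\mathbf{P}_{\Lambda} X$, and $m+i\geq 1$. Here one should note that the set $\mathbf{P}_{\Lambda} X$ is the one fixed for $X$, not $\mathbf{P}_{\Lambda}\Omega^i(X)$. So it is precisely the definition for $X$ that is needed, not Proposition \ref{propiedades de módulos ortogonales a su resol}(3) applied to $\Omega^i(X)$; the latter only controls the smaller set $\mathbf{P}_{\Lambda}\Omega^i(X)\subseteq\mathbf{P}_{\Lambda} X$.

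There is no real obstacle. The only point requiring care is the observation just made: one must test against the full set $\mathbf{P}_{\Lambda} X$ and use the isomorphism $\Em(\Omega^i X,-)\cong\Ext^{m+i}_\Lambda(X,-)$, which is valid for $m\geq 1$ since the syzygies are taken in a projective resolution.
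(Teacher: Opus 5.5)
Your proposal is correct and follows essentially the same route as the paper: split $M$ by additivity of $\Em(-,P)$ into projective summands, which contribute nothing, and summands of the syzygies $\Omega^i(X)$. For the latter you use that the Ext of a summand is a summand of the Ext of the whole, together with dimension shifting $\Em(\Omega^i(X),P)\cong\Ext^{m+i}_\Lambda(X,P)=0$; as you note, this vanishing comes from the definition applied to $X$ against the full set $\mathbf{P}_{\Lambda} X$, not to $\Omega^i(X)$.
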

\begin{proof}
Let $M$ be in \chss, that is, $M=(\bigoplus_{i=0}^nX_i)\oplus P'$, where each $X_i$ is a finite sum of summands direct of $\Omega^i(X)$ and $P'$ is a finite sum of projective modules in $\mathbf{P}_{\Lambda} X$. From the additivity in the first variable of bifunctor Ext$_\Lambda^m$, we have that 
$$\Em(M,P)=\Em(\bigoplus_{i=0}^nX_i, P)\oplus \Em(P',P).$$

Since $P'$ is a projective module we have that $\Em(P',P)=0$ for all $m\geq 1$. Then  $$\Em(M,P)=\bigoplus_{i=0}^n\Em(X_i,P)\;\forall m\geq 1. $$
We know that $\rm{Ext}_\Lambda^{m+i}(X,P)=0$ for all $m+i\geq 1$ and for all $P\in \mathbf{P}_{\Lambda} X$. Then for $m\geq 1$ we have that
$$\displaystyle \Em(X_i,P)\ | \bigoplus_{finite}\Em(\Omega^i(X),P)=\bigoplus_{finite}{\rm Ext}_\Lambda^{m+i}(X,P)=0.$$
Therefore $\Em(M,P)=0$ for all $m\geq 1$ and for all $P$ en $\mathbf{P}_{\Lambda} X$.
\end{proof}

\noindent Observe that, in the proof of the last Lemma we have demonstrated that $M\in $ \chs, is also locally Gorenstein, if $X$ is.

\noindent Now we  presents the central result of this work.

\begin{theorem}
The functor $\underline{\Omega}|_{\underline{\chi}_X}$ from $\underline{\chi}_X$ to $\underline{\chi}_X$, is full and faithful.
\end{theorem}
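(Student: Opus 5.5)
First I check that $\underline{\Omega}$ restricts to a functor $\underline{\chi}_X\to\underline{\chi}_X$. On objects this holds: if $M\in\chi_X$, then $\Omega(M)$ is a direct sum of direct summands of the modules $\Omega^{i+1}(X)$, which lies in $\chi_X$. On morphisms, for $f:M\to N$ I lift $f$ to $f_0:P_0(M)\to P_0(N)$ and take the induced map $\Omega(f):\Omega(M)\to\Omega(N)$. This is well defined modulo maps factoring through projectives. By the previous Lemma, a morphism is zero in $\underline{\chi}_X$ exactly when it is zero in $\underline{\mathrm{mod}}\,\Lambda$, so $\underline{\Omega}$ on $\underline{\mathrm{mod}}\,\Lambda$ restricts correctly. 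I will use the minimal projective covers $P_0(M)$; these lie in $\mathrm{add}(\mathbf{P}_\Lambda X)$ because $M\in\chi_X$.

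\emph{Fullness.} Let $g:\Omega(M)\to\Omega(N)$ with $M,N\in\chi_X$. I form the composite $\iota_N g:\Omega(M)\to P_0(N)$. By Lemma \ref{lem A}, $\mathrm{Ext}^1_\Lambda(M,P_0(N))=0$, since $P_0(N)\in\mathrm{add}(\mathbf{P}_\Lambda X)$. Applying $\mathrm{Hom}_\Lambda(-,P_0(N))$ to $0\to\Omega(M)\to P_0(M)\to M\to0$ then shows that $\iota_Ng$ extends to some $f_0:P_0(M)\to P_0(N)$ with $f_0\iota_M=\iota_Ng$. This $f_0$ induces $f:M\to N$ on cokernels with $\pi_Nf_0=f\pi_M$. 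The triple $(f_0,g,f)$ is therefore a morphism of short exact sequences, so $\underline{\Omega}(\underline f)=\underline g$.

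\emph{Faithfulness.} Suppose $\underline{\Omega}(\underline f)=\underline 0$. Then the restriction $g=\Omega(f)$ factors through a projective, and by Remark \ref{obs 2.1} applied to $\Omega(N)\in\chi_X$ it factors through $P_0(\Omega(N))$. In particular it factors through a projective $Q\in\mathrm{add}(\mathbf{P}_\Lambda X)$, say $g=ba$ with $a:\Omega(M)\to Q$ and $b:Q\to\Omega(N)$. Since $\mathrm{Ext}^1_\Lambda(M,Q)=0$ by Lemma \ref{lem A}, $a$ extends to $a':P_0(M)\to Q$ with $a'\iota_M=a$. Set $h=f_0-\iota_N b a':P_0(M)\to P_0(N)$. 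Then $h\iota_M=\iota_N g-\iota_N g=0$, so $h$ factors through $\pi_M$ as $h=s\pi_M$ for some $s:M\to P_0(N)$. Also $\pi_N h=\pi_N f_0=f\pi_M$, because $\pi_N\iota_N=0$. As $\pi_M$ is an epimorphism, $f=\pi_N s$, so $f$ factors through $P_0(N)$ and $\underline f=\underline 0$.

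The main obstacle is making sure that every projective in which we need Ext-vanishing lies in $\mathrm{add}(\mathbf{P}_\Lambda X)$; this is exactly where the locally Gorenstein hypothesis enters through Lemma \ref{lem A}. In the faithfulness step this requires Remark \ref{obs 2.1} together with $P_0(\Omega(N))\in\mathrm{add}(\mathbf{P}_\Lambda X)$, which holds because $\Omega(N)\in\chi_X$ has its minimal projective cover built from summands of the minimal resolution of $X$.
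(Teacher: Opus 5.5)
Your proof is correct and is essentially the paper's argument. Your fullness step is the paper's construction of $H$ together with the check $\Omega H=\mathrm{id}$. Your faithfulness step is the paper's verification that $H$ sends $\underline{0}$ to $\underline{0}$, applied directly to the lift $f_0$: you extend $a$ along $\iota_M$ using $\mathrm{Ext}^1_\Lambda(M,Q)=0$ where the paper uses a pushout, and proving fullness and faithfulness directly lets you skip the paper's check that $H$ is independent of the choice of $p$.
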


\begin{proof}

\noindent For simplicity, in this proof we will write  $\Omega$ instead of $\underline{\Omega}|_{\underline{ \chi}_X}$. 

\noindent We start showing that $\Omega$ is well defined on the objects, i.e., $\Omega(M)$ is an object in \chs, for all $M$ in \chss. Let  $M$ be an object in \chss, i.e., $M=(\bigoplus_{i=0}^nX_i)\oplus P$ where each $X_i$  is a finite sum of summands of $\Omega^i(X)$, and $P$ is a finite sum of projectives of $\mathbf{P}_{\Lambda} X=\{P_1,...,P_r\}$. We have that $\Omega (M)=\Omega(\bigoplus_{i=0}^nX_l)\oplus\Omega(P)=\bigoplus_{i=0}^n\Omega(X_i)\oplus 0$, where each $\Omega(X_i)$ is a finite sum of summands of $\Omega^{i+1}(X)$, so  $\Omega(M)$ is an object of  \chss. Therefore $\Omega({\rm Obj}(\underline{\chi}_X))\subseteq {\rm Obj}(\underline{\chi}_X)$.
 
 \vspace{0.5cm}

\noindent Now, lets see that $\Omega$ is well defined on morphism, i.e., if $\underline{f}:M\rightarrow N $ is a morphisms in \chs such that $\underline{f}=\underline{0}$, then $\Omega(\underline{f})=\underline{0}$ in \chss, where $\Omega(\underline{f})=\underline{\Omega(f)}$.

\noindent Let $\underline{f}:M\rightarrow N $ be a zero morphism in \chs, that is $\underline{f}=\underline{0}$ in \chss. By Remark \ref{obs 2.1}, $f$ factors through the projective cover  $ \pi_N: \PN \rightarrow N$, i.e., there exists $t:M\rightarrow \PN$ such that $\pi_N t=f$. 

\noindent Therefore we have the following diagram, where the squares and the lower triangle are commutative 
$$\xymatrix{
0\ar[r]&\Omega(M)\ar[r]^{\iota_M}\ar[d]^{\Omega(f)}&
\PM\ar[r]^{\pi_M}\ar[d]^{p}&M\ar@{-->}[dl]^t\ar[r]\ar[d]^{f}&0\\
0\ar[r]&\Omega(N)\ar[r]^{\iota_N}&\PN\ar[r]^{\pi_N}&N\ar[r]&0}$$

\noindent Since $f\pi_M=\pi_Np$, we have
$$0=\pi_Np-f\pi_M=\pi_Np-\pi_Nt\pi_M=\pi_N(p-t\pi_M).$$

\noindent Then, by the universal property of the kernel, there exists $\alpha:\PM\rightarrow \Omega(N)$ such that $\iota_N\alpha=p-t\pi_M$.

\noindent Now we have the following diagram
$$\xymatrix{
0\ar[r]&\Omega(M)\ar[r]^{\iota_M}\ar[d]^{\Omega(f)}&
\PM\ar[r]^{\pi_M}\ar[d]^{p}\ar@{-->}[dl]^{\alpha}&M\ar@{-->}[dl]^t\ar[r]\ar[d]^{f}&0\\
0\ar[r]&\Omega(N)\ar[r]^{\iota_N}&\PN\ar[r]^{\pi_N}&N\ar[r]&0}$$

\noindent and $\iota_N\alpha\iota_M=(p-t\pi_M)\iota_M=p\iota_M-t\pi_M\iota_M=p\iota_M$.
On the other hand, using that $p\iota_M=\iota_N\Omega(f)$ we get that 
$ \iota_N\alpha\iota_M=\iota_N\Omega(f)$. Then $\alpha\iota_M=\Omega(f)$, since $\iota_N$ is a monomorphism.

\noindent We have now the following commutative diagram 
$$\xymatrix{\Omega(M)\ar[dd]^{\Omega(f)}\ar[dr]^{\iota_M}\\&P_0(M)\ar[dl]^{\alpha}&\mbox{ with }P_0(M)\mbox{ in }\underline{\chi}_X.\\\Omega(N)}$$

\noindent  Therefore $\Omega(f)$ factors through $P_0(M)$ and then we have that $\underline{\Omega(f)}=\underline{0}$ in \chs.

\vspace{0.5cm}

\noindent Finally we prove that the functor $\Omega:\underline{\chi}_X \rightarrow \underline{\chi}_X$ is  faithful and full.

\noindent Let $M,\,N\in$ \chs\ and consider $\Omega:\underline{\chi}_X(M,N)\rightarrow\underline{\chi}_X(\Omega(M),\Omega(N))$. We can show that there exists

\noindent $H:\underline{\chi}_X(\Omega(M),\Omega(N))\rightarrow\underline{\chi}_X(M,N)$ such that 
$$H\Omega|_{\underline{\chi}_X(M,N)}= Id_{\underline{\chi}_X(M,N)} \mbox{ and } 
\Omega|_{\underline{\chi}_X(M,N)}H=Id_{\underline{\chi}_X (\Omega(M), \Omega(N))}.$$

As a matter of fact, to define $H$, consider the following commutative diagram:
$$\def\objectstyle{\scriptstyle}
\def\labelstyle{\scriptstyle}
\SelectTips{cm}{}
\vcenter{\xymatrix{&0\ar[d]&0\ar[d]&0\ar[d]&\\
0\ar[r]&\Ho(M,\Omega(N))\ar[r]^{\pi_M^*}\ar[d]^{\iota_{N_*}}&\Ho(\PM,\Omega(N))\ar[r]^{\iota_M^*}\ar[d]^{\iota_{N_*}}&\Ho(\Omega(M),\Omega(N))\ar[r]\ar[d]^{\iota_{N_*}}&\E(M,\Omega(N))
\\0\ar[r]&\Ho(M,\PN)\ar[r]^{\pi_M^*}\ar[d]^{\pi_{N_*}}&\Ho(\PM,\PN)\ar[r]^{\iota_M^*}\ar[d]^{\pi_{N_*}}&\Ho(\Omega(M),\PN)\ar[r]^{\hspace{0.1cm} \mbox{ \tiny{Lemma \ref{lem A}}} }\ar[d]^{\pi_{N_*}}& 0=\E(M,\PN)&(I)
\\0\ar[r]&\Ho(M,N)\ar[r]^{\pi_M^*}\ar[d]&\Ho(\PM,N)\ar[r]^{\iota_M^*}\ar[d]&\Ho(\Omega(M),N)\ar[r]\ar[d]&\E(M,N)
\\&\E(M,\Omega(N))&0&\E(\Omega(M),\Omega(N))&}}$$

\noindent obtained from the exact sequences of the following commutative diagram:
$$\xymatrix{
0\ar[r]&\Omega(M)\ar[r]^{\iota_M}\ar[d]^{\Omega(f)}&
\PM\ar[r]^{\pi_M}\ar[d]^{p}&M\ar[r]\ar[d]^{f}&0\\
0\ar[r]&\Omega(N)\ar[r]^{\iota_N}&\PN\ar[r]^{\pi_N}&N\ar[r]&0}$$

\noindent Define $H:\underline{\chi}(\Omega(M),\Omega(N))\rightarrow\underline{\chi}(M,N)$ in the following way:

\noindent Given $\underline{g}:\Omega(M)\rightarrow\Omega(N)$ in \chss, let $g:\Omega(M)\rightarrow\Omega(N)$ be a representative class of $\underline{g}$. From diagram $(I)$ it follows that $\inc(g)=\iota_Ng\in \Ho(\Omega(M),\PN)$, and since 
$\imct$ is an epimorphism we know that there exits $p:\PM\rightarrow \PN$ such that
$$\imct(p)=\inc(g).$$
Since $\pnc\inc=0$ we have $0=\pnc\inc(g)=\pnc\imct(p)$. So, by the commutativity of the second lower square of the diagram $(I)$,  we know that $\imct\pnc(p)=0$. Then $\pnc(p)\in Ker(\imct)=Im(\pmct)$ and  there exists $h\in\Ho(M,N)$ such that $\pmct(h)=\pnc(p)$.

\noindent At this point, we have the following commutative diagram:
$$\xymatrix{
0\ar[r]&\Omega(M)\ar[r]^{\iota_M}\ar[d]^{g}&
\PM\ar[r]^{\pi_M}\ar@{-->}[d]^p&M\ar[r]\ar@{-->}[d]^{h}&0\\
0\ar[r]&\Omega(N)\ar[r]^{\iota_N}&\PN\ar[r]^{\pi_N}&N\ar[r]&0}$$

\vspace{3mm}

\noindent We define
$$H:\underline{\chi}(\Omega(M),\Omega(N))\rightarrow\underline{\chi}(M,N)
\mbox{ with } 
H(\underline{g})=\underline{h}$$
\begin{center}where $\pmct(h)=\pnc(p)$ with $p$ such that $\imct(p)=\inc(g)$.\end{center}
\vspace{.5cm}

\noindent Let's see that $H$ is well defined, i.e,
$H$ does not depend on the choice of $p$  and  if $\underline{g}=\underline{0}$ in \chs then $\underline{h}=\underline{0}$ in \chss.

\noindent Let $p,p'\in \Ho(\PM,\PN)$ such that $\imct(p)=\inc(g)$, $\pnc(p)=\pmct(h)$, $\imct(p')=\inc(g)$ and $\pnc(p')=\pmct(h')$ with $h,h'\in\Ho(M,N)$. Since $\imct(p)=\imct(p')$ we have that $\imct(p-p')=0$ and $p-p'\in Ker(\imct)=Im(\pmct)$. So, there is $\alpha:M\rightarrow \PN$ such that $\pmct(\alpha)=p-p'$. By the commutativity of the left lower square of the diagram $(I)$, we have that
$\pmct\pnc(\alpha)=\pnc\pmct(\alpha)=\pnc(p-p')=\pnc(p)-\pnc(p')=\pmct(h)-\pmct(h')=\pmct(h-h')$. Now, knowing that $\pmct$ is a monomorphism, we get that $\pnc(\alpha)=h-h'$.

\noindent Therefore, we have the following commutative diagram 
$$\xymatrix{M\ar[dd]^{h-h'}\ar[dr]^{\alpha}\\&P_0(N)\ar[dl]^{\pi_N}\\N}$$

\noindent where $h-h'$ factors through  $P_0(N)$, with $P_0(N)$  in $\underline{\chi}_X$. By Remark  \ref{obs 2.1}, $\underline{h-h'}=\underline{0}$ in \chss, i.e., $\underline{h}=\underline{h'}$ in \chss, so $H:\underline{\chi}(\Omega(M),\Omega(N))\rightarrow\underline{\chi}(M,N)$ does not depend on the choice of $p$.

\vspace{3mm}

\noindent Now we will show that if $H(\underline{g})=\underline{h}$  with $\underline{g}=\underline{0}$ in \chs, then $\underline{h}=\underline{0}$ in \chss.
 
\noindent By Remark  \ref{obs 2.1}, since $\underline{g}=\underline{0}$ in \chss, $g$ factors through  $P_0(\Omega(N))$, we have the following commutative diagram  
$$\xymatrix{
&0\ar[r]&\Omega(M)\ar[dl]^{t}\ar[r]^{\iota_M}\ar[dd]^{g}&
\PM\ar[r]^{\pi_M}\ar[dd]^p&M\ar[r]\ar[dd]^{h}&0\\
&P_0(\Omega(N))\ar[dr]^{\epsilon}&&&&\\
&0\ar[r]&\Omega(N)\ar[r]^{\iota_N}&\PN\ar[r]^{\pi_N}&N\ar[r]&0}$$

\noindent where $g=\epsilon t$. 

\noindent Considering the pushout of the morphisms  $\iota_M$ and $t$, $(PO,\iota_M,t)$, we have the following commutative diagram
$$\xymatrix{
&0\ar[r]&\Omega(M)\ar[r]^{\iota_M}\ar[d]^{t}&
\PM\ar[r]^{\pi_M}\ar[d]^r&M\ar[r]\ar@2{-}[d]&0&\hspace{1cm}(II)\\
\delta: &0\ar[r]&P_0(\Omega(N))\ar[r]^{s}&PO\ar[r]&M\ar[r]&0} 
$$

\noindent Since $P_0(\Omega(N))$ and $M$ are objects in  \chss, we have that ${\rm Ext}_\Lambda^m(M,P_0(\Omega(N))=0$ for all $m\geq 1$, by Lemma \ref{lem A}, in particular ${\rm Ext}_\Lambda^1(M,P_0(\Omega(N))=0$ and then the short exact  sequence $\delta$ splits. So there exists a morphism $z':PO\rightarrow P_0(\Omega(N))$ such that $z's=id_{P_0(\Omega(N))}$. 

\noindent Taking $z=z'r:\PM\rightarrow P_0(\Omega(N))$ we have the following diagram
$$\xymatrix{
&0\ar[r]&\Omega(M)\ar[dl]^{t}\ar[r]^{\iota_M}\ar[dd]^{g}&
\PM\ar[lld]^z\ar[r]^{\pi_M}\ar[dd]^p&M\ar[r]\ar[dd]^{h}&0&\hspace{1cm}(III)\\
&P_0(\Omega(N))\ar[dr]^{\epsilon}&&&&\\
&0\ar[r]&\Omega(N)\ar[r]^{\iota_N}&\PN\ar[r]^{\pi_N}&N\ar[r]&0} $$

\noindent Let's show that $g=\epsilon z \iota_M$. By diagram $(II)$, we have
$st=r\iota_M$. Then $z'st=z'r\iota_M$. From $z's=id_{P_0(\Omega(N))}$ and $z'r\iota_M=z\iota_M$, it follows that $t=z\iota_M$.
By diagram $(III)$,  $g=\epsilon t=\epsilon z\iota_M\mbox{, wich means that,  }g=\imct(\epsilon z)$.
\noindent So, we have the following diagram with the commutative squares and the commutative upper triangle
$$\xymatrix{
0\ar[r]&\Omega(M)\ar[r]^{\iota_M}\ar[d]^{g}&
\PM\ar[r]^{\pi_M}\ar[d]^p\ar[dl]^{\epsilon z}&M\ar[r]\ar[d]^{h}&0\\
0\ar[r]&\Omega(N)\ar[r]^{\iota_N}&\PN\ar[r]^{\pi_N}&N\ar[r]&0}$$

\noindent From $p\iota_M=\iota_Ng=\iota_N\epsilon z\iota_M$ it follows that $(p-\iota_N\epsilon z)\iota_M=0$, i.e., $\imct(p-\iota_N\epsilon z)=0$. Then $p-\iota_N\epsilon z\in Ker(\imct)=Im(\pmct)$.
Since the middle row of the diagram $(I)$ is exact, there is a morphism  $\beta:M\rightarrow \PN$ such that $p-\iota_N\epsilon z=\pmct(\beta)=\beta\pi_M$. 

\noindent By the commutativity of lower left square of the diagram $(I)$, we have
$$\pmct\pnc(\beta)=\pnc\pmct(\beta)=\pnc(p-\iota_N\epsilon z)=\pnc(p)-\pnc(\iota_N\epsilon z)=\pnc(p).$$

\noindent On the other hand, by the definition of $H$, we know that $\pnc(p)=\pmct(h)$. So, we obtain 
$$\pmct\pnc(\beta)=\pnc(p)=\pmct(h).$$

\noindent And then, since $\pmct$ is a monomorphism, we have $\pnc(\beta)=h$, i.e, $\pi_N\beta=h$. 
Therefore $h$ factors through $P_0(N) $ and $\underline{h}=\underline{0}$ in \chss.

\vspace{0.5cm}

\noindent It only remains to prove that the compositions of the maps $H:\underline{\chi}(\Omega(M),\Omega(N))\rightarrow\underline{\chi}(M,N)$ and 
\linebreak$\Omega:\underline{\chi}(M,N)\rightarrow\underline{\chi}(\Omega(M),\Omega(N))$
 are the corresponding identities.

\vspace{3mm}

\noindent First, using the diagram $(I)$, let's show that $H\Omega=id_{\underline{\chi}(M,N)}$, i.e, $H\Omega(\underline{f})=\underline{f}$ for all $\underline{f}$ in $\underline{\chi}(M,N)$. If $f\in\Ho(M,N)$ is a representative of $\underline{f}$, we know that $\pmct(f)= f\pi_M\in\Ho(\PM,N)$. Since $\pnc$ is an epimorphism, there is $p':\PM\rightarrow \PN$ such that $\pnc(p')=\pmct(f)$. Also, for $p'$ we have  $\pnc(p'\iota_M)=\pnc\imct(p')=\imct\pnc(p')=\imct(\pmct(f))=0$. So, $p'\iota_M\in Ker(\pnc)=Im(\inc)$, from where there exists  $g\in\Ho(\Omega(M),\Omega(N))$ such that $\inc(g)=\imct(p')$. Then, by the definition of $H$, we have that $H(\underline{g})=\underline{f}$.

\noindent Considering the commutative diagram
$$\xymatrix{
0\ar[r]&\Omega(M)\ar[r]^{\iota_M}\dar[d]^{\hspace{1.5mm} g}_{\Omega(f)}&
\PM\ar[r]^{\pi_M}\dar[d]^{\hspace{1.5mm} p'}_{p}&M\ar[r]\ar[d]^{f}&0\\
0\ar[r]&\Omega(N)\ar[r]^{\iota_N}&\PN\ar[r]^{\pi_N}&N\ar[r]&0}$$

\noindent we obtain, by the construction of $g$ and the definition of $\Omega(f)$, that $g-\Omega(f)$ factors through a projective module $P$ in $\modu\,\Lambda$. Even more, by Remark \ref{obs 2.1}, $g-\Omega(f)$ factors through  $P_0(\Omega(N))$ and $\underline{g-\Omega(f)}=\underline{0}$ in \chss. Then $\underline{g}=\underline{\Omega(f)}$ in \chs and $H\Omega(\underline{f})=H(\underline{\Omega(f)})=H(\underline{g})=\underline{f}$.

\vspace{3mm}

\noindent To show that  $\Omega H=id_{\underline{\chi}(\Omega(M),\Omega(N))}$, let's consider $\underline{g}$ in $\underline{\chi}(\Omega(M),\Omega(N))$ and let  $\underline{h}=H(\underline{g})$, with $\underline{h}$ in $\underline{\chi}(M,N)$. By the definition of $H$, there exists $p:P_0(M)\rightarrow P_0(N)$ such that the following diagram is commutative
$$\xymatrix{0\ar[r]&\Omega(M)\ar[r]^{\iota_M}\ar[d]^g&P_0(M)\ar[r]^{\pi_M}\ar[d]^p&M\ar[r]\ar[d]^h&0\\0\ar[r]&\Omega(N)\ar[r]^{\iota_N}&P_0(N)\ar[r]^{\pi_N}&N\ar[r]&0}$$

\noindent where $g\in {\rm Hom}_\Lambda(\Omega(M),\Omega(N))$ and $h\in{\rm Hom}_\Lambda(M,N)$ are representatives of  $\underline{g}$ y $\underline{h}$, respectively. Since the definition of $\underline{\Omega}:\underline{\modu}\,\Lambda\rightarrow \underline{\modu}\,\Lambda$ does not depend on the lift considered, let´s calculate  $\underline{\Omega(h)}$ taking the last mentioned morphism  $p:P_0(M)\rightarrow P_0(N)$. Then, since the above diagram is commutative, we have that $\underline{\Omega(h)}=\underline{g}$ in $\underline{\modu}\,\Lambda$. So, $\underline{\Omega(h)}=\underline{g}$ in \chss. Finally, $\Omega(H(\underline{g}))=\Omega(\underline{h})=\underline{g}$.

\end{proof}

\begin{cor}\label{cor teo principal}
Under the conditions of the previous theorem $H:\underline{\chi}(\Omega(M),\Omega(N))\rightarrow\underline{\chi}(M,N)$ and 

\noindent$\Omega:\underline{\chi}(M,N)\rightarrow\underline{\chi}(\Omega(M),\Omega(N))$ are inverse maps. 
\end{cor}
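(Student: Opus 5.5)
The corollary is essentially a restatement of what was established in the last step of the proof of Theorem 1, so I will extract it from that argument rather than prove anything new. Fix $M,N$ in $\underline{\chi}_X$. First, $H$ is a well-defined map $\underline{\chi}(\Omega(M),\Omega(N))\rightarrow\underline{\chi}(M,N)$. This is the part of the proof of the theorem showing two things: the class $\underline{h}$ does not depend on the choice of the lifting $p$ with $\iota_M^*(p)=\iota_{N_*}(g)$, and $\underline{g}=\underline{0}$ forces $\underline{h}=\underline{0}$. I also need that $\underline{h}$ does not depend on the representative $g$ of $\underline{g}$. For this, take $g,g'$ with $\underline{g}=\underline{g'}$ and note that the construction of $H$ is additive. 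Sums of liftings are liftings, so $H(g)-H(g')=H(g-g')$ at the level of classes. This reduces to the zero case already treated.

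Second, I will invoke the two identities proved at the end of the proof of Theorem 1, namely $H\Omega=id_{\underline{\chi}(M,N)}$ and $\Omega H=id_{\underline{\chi}(\Omega(M),\Omega(N))}$. Recall how each was obtained. For $H\Omega=id$, given $f$ one lifts $f\pi_M$ through $\pi_{N_*}$ to some $p'$ and obtains $g$ with $\iota_{N_*}(g)=\iota_M^*(p')$. Then $g-\Omega(f)$ factors through a projective, and by Remark \ref{obs 2.1} through $P_0(\Omega(N))$, so $H\Omega(\underline{f})=\underline{f}$. For $\Omega H=id$, the lifting $p$ used to define $H(\underline{g})=\underline{h}$ is itself a valid lift of $h$. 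Since $\underline{\Omega}$ is independent of the chosen lift, this gives $\underline{\Omega(h)}=\underline{g}$ in $\underline{\modu}\,\Lambda$, and hence in $\underline{\chi}_X$ by the first Lemma of this section. Two maps whose composites in both orders are identities are mutually inverse bijections, which is the claim.

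The only point needing care is the one not spelled out explicitly in the theorem: independence of $H$ from the representative $g$. As explained above, I will handle it through additivity of the construction. That additivity holds because $\iota_{N_*}$, $\iota_M^*$, $\pi_{N_*}$ and $\pi_M^*$ are group homomorphisms in diagram $(I)$, and $\pi_M^*$ is injective. Everything else is a direct citation of the steps in the proof of Theorem 1.
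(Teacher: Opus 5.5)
Your proposal is correct and follows the paper's own route. The paper gives no separate proof of this corollary: it reads it off from the last step of the proof of Theorem 1, where $H\Omega=id_{\underline{\chi}(M,N)}$ and $\Omega H=id_{\underline{\chi}(\Omega(M),\Omega(N))}$ are established exactly as you recall. Your additivity argument that $H$ is independent of the representative $g$ of $\underline{g}$ is valid, and it makes explicit a step the paper leaves implicit.
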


\begin{theorem}\label{teo1}
Let $X$ be a locally Gorenstein module. If $M$ belongs to \chss, then $\phi(M)=0$.
\end{theorem}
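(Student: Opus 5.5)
We must show $\phi(M)=0$, i.e. that $\mathbf{\Omega}$ restricted to $\mathbf{\Omega}^{s}(\langle{\rm add}\,M\rangle)$ is a monomorphism for every $s\geq 0$. The plan is to prove that $\mathbf{\Omega}$ is injective on the whole subgroup of $K_0$ generated by the non-projective indecomposable objects of $\chi_X$. Note that $\Omega^s(M)\in\chi_X$ for all $s$, by the first part of the proof of Theorem 1, so every $\mathbf{\Omega}^s(\langle{\rm add}\,M\rangle)$ lies in this subgroup. Injectivity on the subgroup then restricts to injectivity on each $\mathbf{\Omega}^{s}(\langle{\rm add}\,M\rangle)$, giving $\phi(M)=0$.

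The key step is to show that $\Omega$ sends non-isomorphic non-projective indecomposables of $\chi_X$ to non-isomorphic non-projective indecomposables.

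\emph{Indecomposability.} Let $Y\in\chi_X$ be indecomposable and non-projective. By Lemma \ref{lem A}, $Y$ is locally Gorenstein, so $\pd Y=\infty$ and hence $\Omega(Y)\neq 0$ is non-projective. By Corollary \ref{cor teo principal}, $\Omega$ induces a ring isomorphism $\underline{\rm End}(Y)\cong\underline{\rm End}(\Omega Y)$. Since $Y$ is non-projective indecomposable, $\underline{\rm End}(Y)$ is local and nonzero. For $\Omega Y$, which has no projective summands, a nontrivial decomposition $\Omega Y=A\oplus B$ would produce a nontrivial idempotent in $\underline{\rm End}(\Omega Y)$, since the identity of a non-projective module does not factor through a projective. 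Therefore $\Omega Y$ is indecomposable.

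\emph{Injectivity on classes.} Suppose $\Omega Y\cong\Omega Y'$ with $Y,Y'$ non-projective indecomposables in $\chi_X$. Fullness gives morphisms $Y\to Y'$ and $Y'\to Y$ whose images under $\Omega$ are mutually inverse stable isomorphisms. Faithfulness then says the two composites are the identities in $\underline{\chi}_X$. So $Y\cong Y'$ in $\underline{\rm mod}\,\Lambda$, and since both are indecomposable non-projective, $Y\cong Y'$ in ${\rm mod}\,\Lambda$.

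Consequently $\mathbf{\Omega}$ maps the basis $\{[Y]\}$ of the subgroup injectively onto a linearly independent subset of the basis of $K_0$, and so it is a monomorphism on the subgroup.

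The main obstacle is the step from stable information back to actual modules: indecomposability of $\Omega Y$, and lifting stable isomorphisms to genuine ones. I would handle both with the standard fact that for modules without projective summands, stable endomorphism rings detect decomposability and stable isomorphism implies isomorphism. I would also check carefully that quotienting only by maps through ${\rm add}(\mathbf{P}_\Lambda X)$ causes no discrepancy; this is guaranteed because, by the earlier lemma, $\underline{\chi}_X$ is a full subcategory of $\underline{\rm mod}\,\Lambda$.
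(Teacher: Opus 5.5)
Your overall strategy is sound and close to the paper's. Both arguments reduce $\phi(M)=0$ to injectivity of $\mathbf{\Omega}$ on the subgroup of $K_0$ spanned by $\chi_X$. Both obtain that injectivity by lifting a stable isomorphism $\Omega Y\cong\Omega Y'$ to $Y\cong Y'$, using full faithfulness of $\Omega$ on $\underline{\chi}_X$ (Corollary~\ref{cor teo principal}).

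There is one unjustified step. You assert that $\Omega Y$ has no projective summands, and your indecomposability argument depends on it. A decomposition $\Omega Y=A\oplus B$ with $B$ projective produces no nontrivial idempotent in $\underline{\mathrm{End}}(\Omega Y)$. The assertion is not a general fact: over an arbitrary Artin algebra a syzygy can have projective summands (over a hereditary algebra every syzygy is projective).

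In the present setting the assertion is true, but it needs Lemma~\ref{lem A}. Suppose $\Omega Y=Z\oplus Q$ with $Q\neq 0$ projective.
\begin{enumerate}
\item $Q$ is a summand of the projective cover of $\Omega Y$, so its indecomposable summands lie in $\mathbf{P}_\Lambda Y\subseteq\mathbf{P}_\Lambda X$. Hence $\mathrm{Ext}^1_\Lambda(Y,Q)=0$.
\item Therefore the projection $\Omega Y\to Q$ extends to some $r\colon P_0(Y)\to Q$.
\item This makes $Q$ a direct summand of $P_0(Y)$ lying inside $\Omega Y\subseteq\mathrm{rad}\,P_0(Y)$. So $Q=r(Q)\subseteq\mathrm{rad}\,Q$, and $Q=0$ by Nakayama.
\end{enumerate}
Alternatively, you can bypass the issue entirely. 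Projectives vanish in $K_0$, so it suffices that $\Omega Y$ has exactly one non-projective indecomposable summand, and locality of $\underline{\mathrm{End}}(\Omega Y)$ gives this directly. Your lifting step only uses stable isomorphism, so it goes through unchanged.

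With that repair your proof is complete, but it differs from the paper's in how it passes between $K_0$ and modules. The paper takes arbitrary objects $X_1,X_2$ of $\chi_X$ with $\mathbf{\Omega}[X_1]=\mathbf{\Omega}[X_2]$. It then uses Proposition 3.1 of \cite{FLM}: $[A]=[B]$ in $K_0$ if and only if $A\cong B$ in $\underline{\mathrm{mod}}\,\Lambda$. Lifting the stable isomorphism $\Omega X_1\cong\Omega X_2$ via $H$ gives $[X_1]=[X_2]$ directly. This suffices for injectivity on the whole subgroup, since $\chi_X$ is closed under direct sums and every element is a difference of classes of objects.

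So the paper never needs $\Omega$ to preserve indecomposability, and the projective-summand subtlety never arises. Your basis-by-basis argument pays for that extra step but yields slightly more: $\Omega$ sends non-projective indecomposables of $\chi_X$ to non-projective indecomposables, injectively on isomorphism classes.
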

\begin{proof}
Let $[\underline{\chi}_X]$ be the subgroup of $K_0$ generated by the classes of indecomposable modules in \chss. 
Let's probe that $\On|_{[\underline{\chi}_X]}$ is injective. Let $[X_1]$ and $[X_2]$ be in [\chss] such that $\On([X_1])=\On([X_2])$. From the definition of $\On$ we have that $[\Omega(X_1)]=[\Omega(X_2)]$ and from Proposition 3.1 of \cite{FLM} we have that $\Omega(X_1)\cong\Omega(X_2)$ in $\underline{\modu}\, \Lambda$ and therefore $\Omega(X_1)\cong\Omega(X_2)$ in \chss. Then there are morphisms $\underline{f}:\Omega(X_1)\rightarrow\Omega(X_2)$ and $\underline{g}:\Omega(X_2)\rightarrow\Omega(X_1)$  such that $\underline{g}\underline{f}=\underline{id}_{\Omega(X_1)}$ and $\underline{f}\underline{g}=\underline{id}_{\Omega(X_2)}$. 
 
\noindent  From Corollary \ref{cor teo principal}, $ \Omega $ and $ H $ are inverse maps, we have that $ \underline{g} \underline{f} = \Omega (H (\underline {g}))\Omega(H( \underline{f}))=\Omega (H (\underline {g})H( \underline{f})) $. So, from $ \Omega (\underline{id}_{X_1}) = \underline{id}_{\Omega{(X_1)}} = \underline{g} \underline{f} $ we have that $ \Omega (\underline{id}_{X_1}) = \underline{g} \underline{f} = \Omega (H (\underline{g}) H (\underline{f})) $. Therefore, since $\Omega$ is a faithful functor we have that $\underline{id}_{X_1}=H(\underline{g})H(\underline{f})$. Analogously, $\underline{id}_{X_2}=H(\underline{f})H(\underline{g})$. Then $X_1\cong X_2$ in \chs and  therefore $X_1\cong X_2$ in \mas. Now, from \cite{FLM} we get $[X_1]=[X_2]$ in $K_0$ and therefore $[X_1]=[X_2]$ in [\chss].  

\noindent Finally, $\On|_{[\underline{\chi}_X]}$ is injective and $\phi(M)=0$ for all $M$ en \chss.

\end{proof}

\begin{cor}\label{phi de ortogonal cero}
Let $X$ be a locally Gorenstein module. If $M$ is in \chss, then $\psi(M)=0$.
\end{cor}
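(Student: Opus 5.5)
The plan is to deduce the statement directly from Theorem \ref{teo1} together with Proposition \ref{secita}. Proposition \ref{secita} says that $\phi(M)=\psi(M)$ whenever $M$ is locally Gorenstein, and Theorem \ref{teo1} gives $\phi(M)=0$ for every $M$ in \chss. The only point to verify is therefore that every object $M$ of \chs is itself a locally Gorenstein module.

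For this we use Lemma \ref{lem A} and the remark following it. Write $M=(\bigoplus_{i=0}^n X_i)\oplus P'$, where each $X_i$ is a finite sum of direct summands of $\Omega^i(X)$ and $P'\in{\rm add}(\mathbf{P}_{\Lambda} X)$. Taking the minimal projective resolution of $M$ as the direct sum of minimal resolutions of its indecomposable summands, we get $\mathbf{P}_{\Lambda} X_i\subseteq \mathbf{P}_{\Lambda}\Omega^i(X)\subseteq \mathbf{P}_{\Lambda} X$ and $\mathbf{P}_{\Lambda} P'\subseteq \mathbf{P}_{\Lambda} X$. Hence $\mathbf{P}_{\Lambda} M\subseteq \mathbf{P}_{\Lambda} X$. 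Lemma \ref{lem A} gives $\Em(M,P)=0$ for all $m\geq 1$ and all $P\in\mathbf{P}_{\Lambda} X$, and in particular for all $P\in \mathbf{P}_{\Lambda} M$. So $M$ is locally Gorenstein.

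Applying Proposition \ref{secita} then gives $\psi(M)=\phi(M)$, and Theorem \ref{teo1} gives $\phi(M)=0$. Hence $\psi(M)=0$.

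As an alternative check, one can unwind the definition: $\psi(M)=\phi(M)+\max\{\pd N : N\mid \Omega^{0}(M)=M,\ \pd N<\infty\}$. Each indecomposable summand of $M$ is locally Gorenstein by Proposition \ref{propiedades de módulos ortogonales a su resol}(2). By part (1) of that proposition, each such summand is either projective or of infinite projective dimension, so the maximum is $0$.

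There is no real obstacle. The only step needing care is the inclusion $\mathbf{P}_{\Lambda} M\subseteq \mathbf{P}_{\Lambda} X$, which uses the additivity of minimal projective resolutions and the fact that the resolution of $\Omega^i(X)$ is a truncation of that of $X$.
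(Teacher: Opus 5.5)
Your proposal is correct and matches the paper's (implicit) argument. Objects of $\underline{\chi}_X$ are locally Gorenstein, which is the observation after Lemma \ref{lem A}, so Proposition \ref{secita} and Theorem \ref{teo1} give $\psi(M)=\phi(M)=0$; your explicit check that $\mathbf{P}_{\Lambda} M\subseteq\mathbf{P}_{\Lambda} X$ fills in a step the paper leaves unstated.
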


As a corollary, we obtain further proof of the following result, previously demonstrated in \cite{LM}.

\begin{cor}{\cite{LM}}
If $M\in\,  ^\perp \Lambda$, that is, $M$ is a semi-Gorenstein projective, then $\phi(M)=\psi(M)=0$. In particular, if  $M$ is a Gorenstein projective $\Lambda$-module, then $\phi(M)=\psi(M)=0$.
\end{cor}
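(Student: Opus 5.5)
The plan is to reduce to the locally Gorenstein case via the Corollary stating $\Lambda\text{-}{\rm Gproj}\subseteq {}^\perp\Lambda\subseteq\{\text{locally Gorenstein modules}\}$, and then apply Theorem \ref{teo1} and Corollary \ref{phi de ortogonal cero} with $X=M$ itself.

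Let $M\in{}^\perp\Lambda$. Then ${\rm Ext}^i_\Lambda(M,\Lambda)=0$ for all $i\geq 1$. Every $P_j\in\mathbf{P}_\Lambda M$ is a direct summand of $\Lambda^k$ for some $k$. By additivity of ${\rm Ext}^i_\Lambda(M,-)$ in the second variable, ${\rm Ext}^i_\Lambda(M,P_j)=0$ for all $i\geq 1$. Hence $M$ is locally Gorenstein in the sense of Definition \ref{1}; this is exactly the second inclusion of the cited Corollary.

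Now take $X=M$. The subcategory $\chi_{_X}={\rm add}\big((\bigoplus_{i\geq 0}\Omega^i(M))\oplus(\bigoplus_j P_j)\big)$ contains $M=\Omega^0(M)$, so $M$ is an object of $\underline{\chi}_M$.
\begin{itemize}
\item Theorem \ref{teo1} gives $\phi(M)=0$.
\item Corollary \ref{phi de ortogonal cero} gives $\psi(M)=0$. Alternatively, Proposition \ref{secita} gives $\psi(M)=\phi(M)=0$ directly, since $M$ is locally Gorenstein.
\end{itemize}

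For the Gorenstein projective case, Remark \ref{gproj son estables}(2) shows $\Lambda\text{-}{\rm Gproj}\subseteq{}^\perp\Lambda$, so the previous paragraph applies.

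There is no real obstacle here. The only point needing care is checking that ${}^\perp\Lambda$ membership passes from $\Lambda$ to its indecomposable summands $P_j$, which is immediate from additivity.
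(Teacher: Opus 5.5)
Your proposal is correct and follows the paper's route. The paper obtains this corollary from the inclusion ${}^\perp\Lambda\subseteq\{\text{locally Gorenstein modules}\}$, which you verify via additivity of ${\rm Ext}$ in the second variable, and then applies Theorem~\ref{teo1} and Corollary~\ref{phi de ortogonal cero} with $X=M$ (since $M=\Omega^0(M)\in\underline{\chi}_M$); the Gorenstein projective case follows from Remark~\ref{gproj son estables}(2).
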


\section{ The $n$-locally Gorenstein algebras }

The homological dimensions defined with the Igusa-Todorov functions are related with the finitistic dimension and global dimension. Indeed, the following inequalities are satisfied: 

$${\rm fin.dim}(\Lambda)\leq\phid(\Lambda)\leq \psid(\Lambda)\leq {\rm gl.dim}(\Lambda).$$
\subsection{Definition, examples and firsts results}
\noindent In this section we define the $n$-locally Gorenstein algebras, and we calculate the finitistic dimension, the $\phi$-dimension and the $\psi$-dimension for this family of  algebras. Finally, we exhibit examples of families of $n$-locally Gorenstein algebras.

\begin{defn}\label{def algebras l-G}
An Artin algebra $\Lambda$ will be called   
{\bf n-locally Gorenstein} if each module $X$ in $\Omega^n({\rm mod}\,\Lambda)$ is locally Gorenstein, that is, for all $X\in\Omega^n(\modu \Lambda)$ we have that $\Em(X,P)=0$ for all $m\geq 1$ and for all $P\in\mathbf{P}_{\Lambda} X$.
\end{defn}

\begin{ejes}\label{ejemplo de alg n-ort}
\noindent \begin{enumerate}
\item Selfinjective algebras are $0$-locally Gorenstein. 

\item The $n$-Gorenstein algebras are $n$-locally Gorenstein algebras, because $\Omega^n(\modu \Lambda)\subseteq {\cal G}_{\Lambda}$.

\item 
In fact, the hypothesis id$({_\Lambda}\Lambda)=n<\infty$, 
is sufficient to show that $\Lambda$ is an $n$-locally Gorenstein algebra. This follows from Example \ref{2}\ (b), and also from the known fact that findim$(\Lambda) \leq n$.

\item A one-point extension $\Lambda [M]$ of a $n$-Gorenstein algebra $\Lambda$, is a $(n+1)$-locally Gorenstein algebra. For this purpose observe that $\Omega (\modu \Lambda [M])\subseteq \modu \Lambda$.

\end{enumerate}
\end{ejes}

\begin{eje}
Consider the Nakayama algebra $\Lambda=kQ/I$ given by the following quiver:
$$\xymatrix{Q\!\!\!\! & \!\!\!\!  = \!\!\!\! & \!\!\!\! 1\ar@/^/[r]^\alpha&2\ar@/^/[l]^\beta}$$
and by the ideal $I=\langle \alpha \beta \alpha\rangle$. Then $\Lambda$ is not a Gorenstein algebra but it's a 2-locally Gorenstein algebra.
\end{eje}  

\noindent Let $\Lambda$ be a $n$-locally Gorenstein algebra. Then, by definition,  $\Omega^n(M)$ is a locally Gorenstein module, for any $M\in \modu\Lambda$. This is the key to being able to prove that all $n$-locally Gorenstein algebras verify the finitistic dimension conjecture. Although the result also arises as a corollary of the forthcoming Theorem \ref{teo phi en alg. n-ort}, because ${\rm fin.dim}(\Lambda)\leq\phid(\Lambda)$,
we give a direct proof of it, as it involves the famous finitistic dimension conjecture.

\begin{prop}
The finitistic dimension of a $n$-locally Gorenstein algebra $\Lambda$ is finite: $$fin.dim (\Lambda)\leq n.$$ 
\end{prop}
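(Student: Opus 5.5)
We argue directly from the definition of an $n$-locally Gorenstein algebra, together with part 1 of Proposition \ref{propiedades de módulos ortogonales a su resol}. Let $M\in\modu\Lambda$ with $\pd(M)=d<\infty$. We must show $d\leq n$. Suppose instead that $d>n$.

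First, consider the syzygy $\Omega^n(M)$. It lies in $\Omega^n(\modu\Lambda)$, so by Definition \ref{def algebras l-G} it is a locally Gorenstein module. It also has finite projective dimension, namely
\[
\pd(\Omega^n(M))=d-n\geq 1 .
\]
In particular $\Omega^n(M)$ is non-projective, since its projective dimension is positive.

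Next, apply part 1 of Proposition \ref{propiedades de módulos ortogonales a su resol}: a non-projective locally Gorenstein module has infinite projective dimension. This contradicts $\pd(\Omega^n(M))=d-n<\infty$. Hence $d\leq n$ for every module of finite projective dimension, and so ${\rm fin.dim}(\Lambda)\leq n$.

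For completeness, the mechanism behind that proposition is as follows. If $\pd(Y)=k\geq1$ with minimal projective resolution $0\to Q_k\to\cdots\to Q_0\to Y\to 0$, then ${\rm Ext}^k_\Lambda(Y,Q_k)\neq 0$. Indeed, if it vanished, the monomorphism $Q_k\to Q_{k-1}$ would split, contradicting minimality. Some indecomposable summand $P$ of $Q_k$ then satisfies ${\rm Ext}^k_\Lambda(Y,P)\neq0$, and $P\in\mathbf{P}_{\Lambda}Y$, which violates the locally Gorenstein condition.

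The only point that needs care is the index shift $\pd(\Omega^n M)=\pd(M)-n$ when $\pd(M)\geq n$. This holds for minimal resolutions, since the truncation of a minimal resolution of $M$ is a minimal resolution of $\Omega^n(M)$. No further obstacle is expected. Alternatively, the result also follows from the later bound ${\rm fin.dim}\leq\phid$ once Theorem \ref{teo1} is applied to $\Omega^n(M)$ together with Proposition \ref{phi meno que phi de omega}, but the direct argument above is simpler.
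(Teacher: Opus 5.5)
Your proof is correct and is essentially the paper's argument: $\Omega^n(M)$ is locally Gorenstein and has finite projective dimension, so it is projective by part 1 of the proposition on basic properties of locally Gorenstein modules, and hence $\pd(M)\leq n$. Your contradiction phrasing and the splitting argument showing $\mathrm{Ext}^k_\Lambda(Y,Q_k)\neq 0$ only fill in details that the paper leaves implicit.
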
  

\begin{proof}
Let $M$ be a $\Lambda$-module of finite projective dimension. Then, $\pd (\Omega^n (M))< \infty$ and so $\Omega^n(M)$ is projective by Proposition \ref{propiedades de módulos ortogonales a su resol}.1. Therefore, $\pd (M)\leq n$ and fin.dim$(\Lambda )\leq n<\infty$.
\end{proof}

\noindent Recently, in \cite{BM} and \cite{HK}, examples of algebras with finite finitistic dimension and infinite $\phid$- dimension were exhibited. However, if $\Lambda$ is a n-locally Gorenstein algebra then both dimensions are finite, as the following theorem shows.

\begin{theorem}\label{teo phi en alg. n-ort}
Let $\Lambda$ be a $n$-locally Gorenstein algebra. Then $$\phid(\Lambda)\leq n<\infty.$$
\end{theorem}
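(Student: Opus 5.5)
We show $\phi(M)\leq n$ for every $M\in\modu\Lambda$. Fix $M$. Since $\Lambda$ is $n$-locally Gorenstein, $X:=\Omega^n(M)$ is a locally Gorenstein module. By Theorem \ref{teo1} applied to this $X$, every module in $\underline{\chi}_X$ has $\phi$ equal to $0$. More precisely, the proof of Theorem \ref{teo1} shows that $\On$ restricted to the subgroup $[\underline{\chi}_X]\subseteq K_0$ is injective. Here $[\underline{\chi}_X]$ is generated by the classes of the indecomposable non-projective summands of the modules $\Omega^i(X)$, $i\geq 0$.

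The key step is to bound the subgroup $\On^{n}(\langle{\rm add}\,M\rangle)$. Write $M=\bigoplus_k M_k$ with the $M_k$ indecomposable. Then $\langle{\rm add}\,M\rangle$ is generated by the classes $[M_k]$, and $\On^{n+s}(\langle{\rm add}\,M\rangle)$ is generated by the classes $[\Omega^{n+s}(M_k)]$. Since $\Omega^{n+s}(M)=\bigoplus_k\Omega^{n+s}(M_k)$, each $\Omega^{n+s}(M_k)$ is a direct summand of $\Omega^{s}(X)$, so its indecomposable summands lie in $\chi_X$. Therefore $\On^{n+s}(\langle{\rm add}\,M\rangle)\subseteq[\underline{\chi}_X]$ for every $s\geq 0$.

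Since $\On$ is injective on $[\underline{\chi}_X]$, it is injective on each subgroup $\On^{n+s}(\langle{\rm add}\,M\rangle)$. Hence ${\rm rg}\,\On^{n+s}(\langle{\rm add}\,M\rangle)={\rm rg}\,\On^{n+s+1}(\langle{\rm add}\,M\rangle)$ for all $s\geq 0$. By the second (monomorphism) formulation of the definition of $\phi$, this gives $\phi(M)\leq n$. Taking the supremum over $M$ yields $\phid(\Lambda)\leq n$.

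The main obstacle is the injectivity of $\On$ on the whole group $[\underline{\chi}_X]$, not just on isomorphism classes. Theorem \ref{teo1} is stated for a single module of $\chi_X$. Its proof argues with $[X_1]$ and $[X_2]$, which amounts to showing that $\Omega$ does not identify non-isomorphic indecomposables of $\underline{\chi}_X$ and does not send them to zero. This passes to the free group generated by these indecomposables, because for an indecomposable non-projective $Y$ in $\chi_X$, $\Omega(Y)$ is indecomposable: $\Omega$ is full and faithful on $\underline{\chi}_X$, so the stable endomorphism ring of $Y$ is local and maps isomorphically onto that of $\Omega(Y)$. If needed, I would restate that proof directly for the subgroup generated by the finitely many indecomposable summands of $\bigoplus_{s\leq N}\Omega^s(X)$; this suffices because ranks stabilize after finitely many steps.

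As an alternative route, one can apply Theorem \ref{teo1} directly to the module $X=\Omega^n(M)$ itself, which gives $\phi(\Omega^n M)=0$. Iterating Proposition \ref{phi meno que phi de omega}(1) $n$ times then gives $\phi(M)\leq\phi(\Omega^n M)+n=n$. This is shorter, and I would present it as the actual proof, keeping the rank argument above as an explanation.
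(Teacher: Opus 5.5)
Your proposal is correct. The argument you say you would actually present is exactly the paper's proof: apply Theorem~\ref{teo1} to the locally Gorenstein module $X=\Omega^n(M)$, which lies in $\chi_X$, to get $\phi(\Omega^n(M))=0$, then iterate $\phi(Y)\leq\phi(\Omega(Y))+1$ to obtain $\phi(M)\leq n$. Your rank argument is also valid and just unpacks these two ingredients. The endomorphism-ring patch is harmless but unnecessary, because the paper's proof of Theorem~\ref{teo1} works for arbitrary (not only indecomposable) $X_1,X_2\in\chi_X$, and every element of $[\underline{\chi}_X]$ is a difference $[X_1]-[X_2]$ of such classes.
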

\begin{proof}
If $M\in\modu\Lambda$ then $\Omega^n(M)$ is a locally Gorenstein module by Proposition \ref{propiedades de módulos ortogonales a su resol}.3..  From Theorem \ref{teo1} we have that $\phi(\Omega^n(M))=0$, and from Proposition 1.4.7 \cite{HL}
 $$\phi(M)\leq \phi(\Omega^n(M))+n=0+n=n,$$
\noindent and then $\phid(\Lambda)\leq n<\infty$.
\end{proof}

\begin{cor}
Let $\Lambda$ be a $n$-locally Gorenstein algebra. Then $$\phid(\Lambda)=\psid(\Lambda)\leq n<\infty.$$
\end{cor}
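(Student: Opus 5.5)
The plan is to reduce the equality $\phid(\Lambda)=\psid(\Lambda)$ to a bound on $\psi$, since Theorem \ref{teo phi en alg. n-ort} already gives $\phid(\Lambda)\leq n$. By Proposition \ref{prop desiguladades entre dim} we have $\phid(\Lambda)\leq\psid(\Lambda)$. So it suffices to show $\psi(M)\leq\phid(\Lambda)$ for every $M\in\modu\Lambda$. A natural first attempt is to show directly that $\psi(M)\leq n$ and then compare. That would only give $\psid(\Lambda)\leq n$, however, not the equality, so I will instead compare $\psi(M)$ with $\phi(M)$ pointwise.

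Fix $M$ and put $k=\phi(M)$. By definition,
$$\psi(M)=k+\max\{\pd(N): N\mid\Omega^k(M),\ \pd(N)<\infty\}.$$
The aim is to show that the maximum is at most $n-k$ when $k<n$, and equal to $0$ when $k\geq n$.

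\textbf{Case $k\geq n$.} Here $\Omega^k(M)=\Omega^{k-n}(\Omega^n(M))$. This is locally Gorenstein by Definition \ref{def algebras l-G} and Proposition \ref{propiedades de módulos ortogonales a su resol}.3. Each of its direct summands is then locally Gorenstein by part 2 of that proposition, and so is either projective or of infinite projective dimension by part 1. Hence the maximum is $0$ and $\psi(M)=\phi(M)$, exactly as in the proof of Proposition \ref{secita}.

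\textbf{Case $k<n$.} Let $N\mid\Omega^k(M)$ with $\pd(N)<\infty$. Then $\Omega^{n-k}(N)$ is a direct summand of $\Omega^n(M)$, hence locally Gorenstein, and it has finite projective dimension. So it is projective, which gives $\pd(N)\leq n-k$ and therefore $\psi(M)\leq n$. Combining the two cases yields $\psid(\Lambda)\leq\max(n,\phid(\Lambda))$.

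The main obstacle is upgrading this to the equality $\psid(\Lambda)=\phid(\Lambda)$ when $\phid(\Lambda)<n$. For that I would use the finitistic dimension. If $\pd(N)=d<\infty$, then the module $\Omega^k(M)$ has a summand $N$ with $\pd N=d$, and the question is whether $k+d\leq\phid(\Lambda)$. I would try to realize a module $L$ with $\phi(L)\geq k+d$. The candidate is a module $L$ with $\Omega^k(L)$ containing $N$ and $\pd L=k+d$, so that $\phi(L)=\pd(L)$ by Proposition \ref{propiedades de phi}.1. Such an $L$ would give $\psi(M)\leq\operatorname{fin.dim}(\Lambda)\leq\phid(\Lambda)$.

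The alternative route is to invoke the known general fact from \cite{HL}, \cite{LM} that $\psid(\Lambda)=\phid(\Lambda)$ whenever $\phid(\Lambda)$ is finite, together with Theorem \ref{teo phi en alg. n-ort}. In the paper I would take this citation route as the cleanest, and present the case analysis above to supply the bound $\leq n$ directly.
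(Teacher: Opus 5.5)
Your case analysis is correct and proves $\psid(\Lambda)\le n$ by a more direct route than the paper. The paper iterates $\psi(X)\le\psi(\Omega X)+1$ to get $\psi(M)\le\psi(\Omega^n M)+n$, then uses Proposition~\ref{secita} and Theorem~\ref{teo1} to get $\psi(\Omega^nM)=\phi(\Omega^nM)=0$. You instead bound $\pd N\le n-\phi(M)$ directly: $\Omega^{n-\phi(M)}N$ is a summand of $\Omega^nM$, hence locally Gorenstein of finite projective dimension, hence projective. Combined with $\phi\le\psi$, this is exactly what the paper's proof establishes, namely $\phid(\Lambda)\le\psid(\Lambda)\le n$.

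The gap is the equality $\phid(\Lambda)=\psid(\Lambda)$. You are right that the bound does not give it, but neither of your two fixes works.

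\emph{Route (b).} It invokes a general principle that is false. Let $\Gamma$ be the radical square zero algebra of the quiver with a loop at $c$ and arrows $c\to x_1\to x_2$, so that $\mathrm{rad}\,P_c\cong S_c\oplus S_{x_1}$ and $\mathrm{rad}\,P_{x_1}\cong S_{x_2}=P_{x_2}$.
\begin{itemize}
\item The non-projective indecomposables are $S_c$, $S_{x_1}$, $U_1=P_c/S_{x_1}$ and $U_2=P_c/S_c$.
\item Their syzygies are $\Omega S_c=S_c\oplus S_{x_1}$, $\Omega U_1=S_{x_1}$ and $\Omega U_2=S_c$, and $\pd S_{x_1}=1$.
\item The ranks of $\mathbf{\Omega}^j(K_0)$ are $4,2,1,1,\dots$. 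By Proposition~\ref{propiedades de phi}, $\phid(\Gamma)$ is $\phi$ of the sum of all indecomposables, so $\phid(\Gamma)=2$.
\item The module $M=S_c\oplus U_2$ has $\phi(M)=2$ and $S_{x_1}\mid\Omega^2M$, so $\psi(M)=3$.
\end{itemize}
Thus $\phid(\Gamma)<\psid(\Gamma)$ while $\phid(\Gamma)$ is finite. $\Gamma$ is not locally Gorenstein, so this does not refute the corollary. It does show the equality cannot be imported as a general fact.

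\emph{Route (a).} It is not carried out, and its key object need not exist. Nothing produces an $L$ of finite projective dimension with $N\mid\Omega^kL$. The natural candidate $P_{k-1}(M)/N$ has first syzygy $N$, so its projective dimension is only $d+1$, which is less than $k+d$ once $k\ge 2$. Moreover, in $\Gamma$ one has $\mathrm{fin.dim}(\Gamma)=2<\psi(M)$. So an inequality $\psi(M)\le\mathrm{fin.dim}(\Lambda)$ can only come from an essential use of the locally Gorenstein hypothesis, which your sketch does not make.

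The paper's written proof has the same hole: it concludes equality from $\phi(X)\le\psi(X)$, which only yields $\phid\le\psid$. To prove the stated equality you would need a new argument that uses the locally Gorenstein hypothesis. Concretely, for each $M$ with $\phi(M)=k$ and $N\mid\Omega^kM$ of projective dimension $d$, you must exhibit some module $X$ with $\phi(X)\ge k+d$. As written, your proposal proves only $\phid(\Lambda)\le\psid(\Lambda)\le n$.
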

\begin{proof} If $M\in\modu\Lambda$ then $\Omega^n(M)$ is a locally Gorenstein module, then: 
 $$\psi(M)\leq \psi(\Omega^n(M))+n = \phi(\Omega^n(M)+n=0+n,$$ where the equalities hold by Proposition \ref{secita} and Theorem \ref{teo1}. Finally, from definition, $\phi(X)\leq \psi(X)$, for any $X\in\modu\Lambda$, and the result is done. 
\end{proof}

\subsection{Morita context algebras that are locally Gorenstein}\label{seccion 5.2}
Given $A$ and $B$ two Artin locally Gorenstein algebras, the goal of this section is to show that the algebra 

\noindent $\Lambda_{(0,0)}=\begin{pmatrix} A & _AN_B \\
_BM_A & B\end{pmatrix}$, given by a Morita context, is also locally Gorenstein, when certain conditions are imposed on the bimodules $_BM_A$ and $_AN_B$.
       
\begin{prop}\label{cubiertas proy en Lambda}
    Let be $\Lambda_{(0,0)}=\begin{pmatrix}
         A & _AN_B \\ _BM_A & B \end{pmatrix}$ 
        an algebra  arising from a Morita context, $X$ be an  $A$-module and $Y$ be a B-module. Then:
        \begin{enumerate}
        \item If $P_0(X)$ is the projective cover of $X$ in $\modu A$ 
        then $(P_0(X),M\otimes_A P_0(X),Id_{M\otimes P_0(X)},0)$ is the projective cover of $(X,0,0,0)$ in $\modu \Lambda_{(0,0)}$.
        \item If  $Q_0(Y)$ is the projective cover of $Y$ in $\modu B$ 
        then $(N\otimes_B Q_0(Y), Q_0(Y),0,Id_{N\otimes Q_0(Y)})$ is the projective cover of $(0,Y,0,0)$ in  $\modu \Lambda_{(0,0)}$.  
        \end{enumerate}    
\end{prop}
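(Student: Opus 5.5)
I will prove part 1 in detail; part 2 is symmetric, obtained by exchanging the roles of $A$ and $B$, of $M$ and $N$, and of $T_A$ and $T_B$.

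Write $\Lambda=\Lambda_{(0,0)}$ and $\pi:P_0(X)\to X$ for the projective cover in $\modu A$. The first step is to observe that the proposed module $(P_0(X),M\otimes_A P_0(X),Id,0)$ is exactly $T_A(P_0(X))$, since $\alpha=0$ forces $\alpha_{P_0(X)}=0$. Decomposing $P_0(X)$ into indecomposable projective $A$-modules and using the description of indecomposable projective $\Lambda$-modules recalled above, this module is projective in $\modu\Lambda$.

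The second step is to construct the epimorphism $(\pi,0):T_A(P_0(X))\to (X,0,0,0)$. I will check that this pair is a $\Lambda$-morphism. The first square commutes because both composites land in $0$. The second square reads $\pi\circ 0=0\circ(Id_N\otimes 0)$, which is trivially true. The map is surjective because $\pi$ is and the $B$-component of the target is $0$. Equivalently, $(\pi,0)$ corresponds to $\pi$ under the adjunction $(T_A,U_A)$.

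The third step, and the main point, is minimality, i.e.\ that $(\pi,0)$ is a projective cover. I will show that its kernel $K=(\Ker\pi,\ M\otimes_A P_0(X),\ \ldots)$ is superfluous. My plan is to use the criterion that an epimorphism $\varphi:P\to Z$ with $P$ projective is a projective cover if and only if no nonzero direct summand of $P$ lies in $\Ker\varphi$.

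Suppose $T_A(P_0(X))=P'\oplus P''$ with $P''\subseteq K$. Since $T_A(P_0(X))$ is a direct sum of modules of the form $T_A(P)$, Krull--Schmidt implies $P''\cong T_A(Q)$ for some projective $A$-module $Q$. More concretely, I will argue that any decomposition of $T_A(P_0(X))$ is induced by applying $U_A$, which yields a decomposition $P_0(X)=U_A(P')\oplus U_A(P'')$.

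Then $U_A(P'')\subseteq\Ker\pi$, and minimality of $\pi$ forces $U_A(P'')=0$. But $P''\cong T_A(Q)$ has $U_A(P'')\cong Q$, so $Q=0$ and hence $P''=0$.

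The delicate point is the compatibility between summands of $T_A(P_0(X))$ and summands of $P_0(X)$. An alternative route is to compare tops: $\mathrm{top}\,T_A(P)$ should equal $(\mathrm{top}\,P,0,0,0)$, so that the projective cover of $(X,0,0,0)$ has top $(\mathrm{top}\,X,0,0,0)$, which matches $T_A(P_0(X))$. If needed, I will justify this top computation via the radical of the indecomposable projective $\Lambda$-modules.
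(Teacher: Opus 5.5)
Your proof is correct, but it establishes minimality by a different route from the paper's.

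The paper shows directly that $(p,0)$ is an essential epimorphism. Suppose $(a',b')\colon (X',Y',f,g)\to T_A(P_0(X))$ is such that $(p,0)\circ(a',b')$ is surjective. Then $a'$ is surjective because $p$ is a projective cover. The commutative square $b'\circ f=Id_{M\otimes P_0(X)}\circ(Id_M\otimes a')$, together with right exactness of $M\otimes_A-$, then forces $b'$ to be surjective.

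You instead use the criterion that $\Ker(\pi,0)$ contains no nonzero direct summand. You handle the $B$-component through Krull--Schmidt: a summand $P''$ of $T_A(P_0(X))=\bigoplus_i T_A(P_i)$ must be of the form $T_A(Q)$, so it cannot sit entirely in the $B$-part. You then apply $U_A$, which is exact and additive with $U_AT_A=\mathrm{Id}$, to reduce to the minimality of $\pi$. In your argument, Krull--Schmidt plays the role that right exactness of $M\otimes_A-$ plays in the paper.

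The paper's route is more elementary. It only needs $T_A(P_0(X))$ to be projective, and it shows the real reason the result holds: the $B$-part $M\otimes_A P_0(X)$ is generated by the image of the $A$-part. Your route additionally needs each $T_A(P_i)$ to be indecomposable with local endomorphism ring. You should justify this explicitly, for instance via the adjunction, $\mathrm{End}_\Lambda(T_A(P))\cong \mathrm{Hom}_A(P,U_AT_A(P))=\mathrm{End}_A(P)$. In exchange, you get a clean functorial reduction to the $A$-side.

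The ``delicate point'' you flag is not actually an issue. $U_A$ simply takes the $A$-component of an internal decomposition, so $P_0(X)=U_A(P')\oplus U_A(P'')$ with $U_A(P'')\subseteq\Ker\pi$ is automatic. The alternative argument via tops is therefore unnecessary.
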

\begin{proof} We show  only 1. because 2. is analogous. Let 
$p:P_0(X)\rightarrow X$ be the projective cover of $X$ on 
$\modu A$. 

    It is clear that $(p,0): (P_0(X),M\otimes_A P_0(X),Id_{M\otimes P_0(X)},0)\rightarrow (X,0,0,0)$ is an epimorphism of $\Lambda_{(0,0)}$-modules. We need to show that $(p,0)$ is a minimal morphism: let $(a',b'):(X',Y',f,g)\rightarrow (P_0(X),M\otimes P_0(X),Id_{M\otimes P_0(X)},0)$ be a $\Lambda_{(0,0)}$-morphism such that $(p,0)\circ (a',b')$ is an epimorphism. We must show that $(a',b')$  is an epimorphism. On the one hand, we remark that $a'$ is an epimorphism since $p$ is a minimal morphism.

On the other hand, from the commutative diagram 
$$\xymatrix{M\otimes_A X'\ar[d]_{Id_M\otimes a'}\ar[r]^f & Y' \ar[d]^{b'}\\M\otimes_A P_0(X) \ar[r]_{Id_{M\otimes P_0(X)}}& M\otimes_A P_0(X)}$$
we have that $b'\circ f=Id_{M\otimes P_0(X)}\circ (Id_M\otimes a')$, and since $Id_{M\otimes P_0(X)}\circ (Id_M\otimes a')$ is an epimorphism (because we proved above that $a'$ is an epimorphism) we have that $b'$ it is also an epimorphism.       
\end{proof}

\begin{prop}\label{Resol. proy y sizigias utiles}
   Let $A$ and $B$ be Artin algebras and $\Lambda_{(0,0)}= \begin{pmatrix}
         A & _AN_B \\ _BM_A & B \end{pmatrix}$ be the algebra associated with the Morita context. Let $X$ an $A$-module such that  $M\otimes_A P=0$ for all $P\in \mathcal{P}_A X$ and  $Y$ be an  $B$-module such that $N\otimes_B Q=0$ for all $Q\in \mathcal{P}_B Y$. Then: 
   
\begin{enumerate}
    \item If $\xymatrix{...\ar[r]&P_n\ar[r]&...\ar[r]&P_1\ar[r]&P_0\ar[r]&X\ar[r]&0}$ is the minimal projective resolution of $X$ in $\modu A$ then 
    $$\xymatrix{...\ar[r]&(P_n,0,0,0)\ar[r]&...\ar[r]&(P_1,0,0,0)\ar[r]&(P_0,0,0,0)\ar[r]&(X,0,0,0)\ar[r]&0}$$ is the minimal projective resolution of $(X,0,0,0)$ in  $\modu \Lambda_{(0,0)}$.
    \item $\Omega^n_{\Lambda}((X,0,0,0))=(\Omega^n_A(X),0,0,0)$ for all $n\geq 0$.
    \item If $\xymatrix{...\ar[r]&Q_n\ar[r]&...\ar[r]&Q_1\ar[r]&Q_0\ar[r]&Y\ar[r]&0}$ is the minimal projective resolution of $Y$ in $\modu B$ then 
    $$\xymatrix{...\ar[r]&(0,Q_n,0,0)\ar[r]&...\ar[r]&(0,Q_1,0,0)\ar[r]&(0,Q_0,0,0)\ar[r]&(0,Y,0,0)\ar[r]&0}$$ is the minimal projective resolution of $(0,Y,0,0)$ in $\modu\Lambda_{(0,0)}$.
    \item $\Omega_{\Lambda}^n((0,Y,0,0))=(0,\Omega_B^n(Y),0,0)$ for all $n\geq 0$.
\end{enumerate}
   
\end{prop}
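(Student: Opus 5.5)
We treat items 1 and 2 in detail; items 3 and 4 follow by the same argument with the roles of $A$, $M$ and $B$, $N$ exchanged (using part 2 of Proposition \ref{cubiertas proy en Lambda}). The idea is to show that, under the hypothesis $M\otimes_A P=0$ for all $P\in\mathcal{P}_A X$, the minimal projective resolution of $X$ in $\modu A$ passes to $\modu\Lambda_{(0,0)}$ without change. Here we read $\mathcal{P}_A X$ as $\mathbf{P}_A X$, the set of indecomposable projective summands occurring in the minimal projective resolution of $X$.

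First we observe that for every $P_i$ in the minimal resolution, $M\otimes_A P_i=0$, because $P_i$ is a finite direct sum of modules in $\mathcal{P}_A X$ and the tensor product is additive. By Proposition \ref{cubiertas proy en Lambda}.1, the projective cover of $(P_i\text{-quotient})$ has the form $(P_0(-),M\otimes_A P_0(-),Id,0)$, so the $\Lambda_{(0,0)}$-projective $(P_i,M\otimes_A P_i,Id,0)$ is simply $(P_i,0,0,0)$. Moreover, any $A$-morphism $d:P_{i}\to P_{i-1}$ gives a $\Lambda_{(0,0)}$-morphism $(d,0):(P_i,0,0,0)\to(P_{i-1},0,0,0)$; the compatibility squares are trivially commutative because all $B$-components and structure maps vanish. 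Since kernels and images in $\modu\Lambda_{(0,0)}$ are computed componentwise on the $A$- and $B$-parts, the sequence $\cdots\to(P_1,0,0,0)\to(P_0,0,0,0)\to(X,0,0,0)\to0$ is exact.

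For minimality we argue inductively. By Proposition \ref{cubiertas proy en Lambda}.1, the map $(P_0,0,0,0)\to(X,0,0,0)$ is the projective cover, since $M\otimes_A P_0(X)=0$. Its kernel is $(\Omega_A(X),0,0,0)$, again computed componentwise; note that the structure maps of a submodule are restrictions, and here they are all zero. We then apply Proposition \ref{cubiertas proy en Lambda}.1 to $\Omega_A(X)$, whose projective cover $P_1$ also satisfies $M\otimes_A P_1=0$. Iterating this gives both the minimality of the resolution (item 1) and $\Omega^n_\Lambda((X,0,0,0))=(\Omega^n_A(X),0,0,0)$ (item 2), by induction on $n$.

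The only delicate point is the identification of the kernel of $(p,0)$ as a $\Lambda_{(0,0)}$-module with zero structure maps, together with the check that $(P_i,0,0,0)$ really is projective. The second follows from the description of the indecomposable projectives of $\Lambda_{(0,0)}$ once $M\otimes_A P_i=0$. We expect the main care to be needed in making the componentwise computation of kernels explicit.
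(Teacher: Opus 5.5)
Your proposal is correct and is essentially the paper's argument: the paper also applies Proposition \ref{cubiertas proy en Lambda}.1 to each $\Omega^n_A(X)$. It uses $M\otimes_A P_0(\Omega^n_A X)=0$ to see that $(P_0(\Omega^n_A X),0,0,0)$ is the projective cover of $(\Omega^n_A(X),0,0,0)$, which gives items 1 and 2, and items 3 and 4 follow dually. Your remarks that kernels are computed componentwise and that $(P_i,0,0,0)=T_A(P_i)$ is projective only make explicit what the paper's one-line proof leaves implicit.
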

\begin{proof}
By Proposition \ref{cubiertas proy en Lambda} 1. and $M\otimes_A P=0$ for all $P\in\mathcal{P}_A X$, we have that $((P_0(\Omega_A^nX)),0,0,0),(p_n,0))$ is the projective cover of $(\Omega^n_A(X),0,0,0)$  for all $n\geq 0$, so $1$ and $2$ hold.

A similar argument proves $3$ and $4$.
     
\end{proof}

\begin{cor}
Let $A$ and $B$ be Artin algebras and $\Lambda_{(0,0)}= \begin{pmatrix}
         A & _AN_B \\ _BM_A & B \end{pmatrix}$ be the algebra associated with the Morita context. Let $X$ an $A$-module such that  $M\otimes_A P=0$ for all $P\in \mathcal{P}_A X$ and  let $Y$ be an  $B$-module such that $N\otimes_B Q=0$ for all $Q\in \mathcal{P}_B Y$. Then every projective $\Lambda_{(0,0)}$-module in $ \mathcal{P}_{\Lambda} (X,Y,0,0)$ has the form $(P,Q,0,0)$ with $P\in\mathcal{P}_A X$ and $Q\in\mathcal{P}_B Y$.

\end{cor}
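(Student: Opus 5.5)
The plan is to compute the minimal projective resolution of $(X,Y,0,0)$ directly and read off its indecomposable summands. First I check that $(X,Y,0,0)$ is a $\Lambda_{(0,0)}$-module; this is immediate, since $\alpha=\beta=0$ and the structure maps are zero. It decomposes as
$$(X,Y,0,0)\cong(X,0,0,0)\oplus(0,Y,0,0).$$
Indeed, the inclusions $(\iota,0)$ and $(0,\iota)$ and the projections are $\Lambda_{(0,0)}$-morphisms, because every square involving a zero structure map commutes trivially.

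Next I use additivity of minimal projective resolutions. The minimal projective resolution of a direct sum is the direct sum of the minimal projective resolutions of the summands. Hence
$$\mathcal{P}_{\Lambda}(X,Y,0,0)=\mathcal{P}_{\Lambda}(X,0,0,0)\cup\mathcal{P}_{\Lambda}(0,Y,0,0).$$

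I then apply Proposition \ref{Resol. proy y sizigias utiles}, items 1 and 3. The minimal projective resolution of $(X,0,0,0)$ has terms $(P_n,0,0,0)$, where the $P_n$ are the terms of the minimal $A$-resolution of $X$. The same holds for $(0,Y,0,0)$ with terms $(0,Q_n,0,0)$. An indecomposable summand of $(P_n,0,0,0)$ has the form $(P,0,0,0)$ with $P$ an indecomposable summand of $P_n$, so $P\in\mathcal{P}_A X$. This uses that the functor $P\mapsto(P,0,0,0)$ is fully faithful on this subcategory, and that $(P,0,0,0)=(P,M\otimes_A P,\mathrm{Id},0)$ is indecomposable projective because $M\otimes_A P=0$. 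Symmetrically, the summands on the other side are $(0,Q,0,0)$ with $Q\in\mathcal{P}_B Y$.

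Finally I interpret the statement. Read literally, the projectives in $\mathcal{P}_\Lambda(X,Y,0,0)$ are of the form $(P,0,0,0)$ or $(0,Q,0,0)$. These are instances of $(P,Q,0,0)$ with $P\in\mathcal{P}_A X\cup\{0\}$ and $Q\in\mathcal{P}_B Y\cup\{0\}$. If the statement is read for projective modules in $\mathrm{add}$ of this set, rather than indecomposable ones, the direct sums give exactly $(P,Q,0,0)$ with $P\in\mathrm{add}\,\mathcal{P}_A X$ and $Q\in\mathrm{add}\,\mathcal{P}_B Y$.

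The main obstacle is this bookkeeping in the last step, namely how the statement's form $(P,Q,0,0)$ matches the indecomposables $(P,0,0,0)$ and $(0,Q,0,0)$. I would state explicitly the convention that one of $P,Q$ may be zero. The rest is a direct consequence of the previous proposition.
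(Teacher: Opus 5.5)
Your argument is correct and matches the paper. The paper gives no separate proof of this corollary; it treats it as immediate from items 1 and 3 of Proposition \ref{Resol. proy y sizigias utiles}, combined with the splitting $(X,Y,0,0)\cong(X,0,0,0)\oplus(0,Y,0,0)$ and additivity of minimal projective resolutions. Your bookkeeping remark is also correct: an indecomposable projective in $\mathcal{P}_{\Lambda}(X,Y,0,0)$ is $(P,0,0,0)$ or $(0,Q,0,0)$, so the statement must be read with one of $P,Q$ allowed to be zero (or up to $\mathrm{add}$).
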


\noindent If we assume that $A$ is a $n$-locally Gorenstein algebra and that $B$ is a $m$-locally Gorenstein algebra, then we want to show that $\Lambda_{(0,0)}=\begin{pmatrix}A& _AN_B \\ _BM_A & B \end{pmatrix}$, under certain conditions on the bimodules $M$ and $N$, is a $(l+1)$-locally Gorenstein algebra, with $l=max\{m, n\}$.

\begin{lem}\label{lema 5.9}
Let $A$ and $B$ $n$ and $m$ locally Gorenstein algebras, respectively, and let $_BM_A$ and $_AN_B$ projective bimodules such that  $M\otimes_A N=0$ and $N\otimes_BM=0$. We consider $\Lambda_{(0,0)}$ the algebra associated with the Morita context. If for each  $X\in \Omega^n(\modu A)$ we have that $M\otimes_A P=0$ for all $ P\in\mathcal{P}_A X$ and for each $Y\in  \Omega^m(\modu B)$
we have that $N\otimes_B Q=0$ for all $Q\in\mathcal{P}_B Y$, then for each $(X,Y,f,g)\in \Omega^t(\modu \Lambda_{(0,0)})$ where $t=$max$\{n,m\}$, the following
statements hold:
\begin{center}
$\Omega_\Lambda (X,Y,f,g)=(\Omega_A(\overline{X}),0,0,0)\oplus(0,\Omega_B(\overline{Y}),0,0),$ where $\overline{X}\in \Omega_A^t(\modu A)$ and $\overline{Y}\in \Omega_B^t(\modu B)$.
\end{center}

\end{lem}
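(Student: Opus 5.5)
The plan is to compute $\Omega_\Lambda$ of an arbitrary $\Lambda$-module $Z=(X,Y,f,g)$ explicitly. Then, for $Z\in\Omega^t(\modu\Lambda_{(0,0)})$, I will show that the mixing data $f,g$ die after one more syzygy.

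\textbf{Step 1 (projective cover of a general module).} Since $\alpha=\beta=0$, the images ${\rm Im}\,g\subseteq X$ and ${\rm Im}\,f\subseteq Y$ lie in the radical part coming from the off-diagonal ideal. I will set $\overline{X}=X/{\rm Im}\,g$ and $\overline{Y}=Y/{\rm Im}\,f$. Using the adjunctions $(T_A,U_A)$ and $(T_B,U_B)$, I will show that
$$T_A(P_0(\overline{X}))\oplus T_B(Q_0(\overline{Y}))\longrightarrow Z$$
is a projective cover. This generalises Proposition \ref{cubiertas proy en Lambda}: the lifts $P_0(\overline{X})\to X$ and $Q_0(\overline{Y})\to Y$ induce the map, and a minimality argument like the one there gives that it is a cover. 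By definition its kernel is $\Omega_\Lambda(Z)$.

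\textbf{Step 2 (behaviour of $U_A,U_B$ on syzygies).} The functors $U_A$ and $U_B$ are exact. Since $M$ and $N$ are projective bimodules, $U_A T_B(Q)=N\otimes_BQ$ and $U_B T_A(P)=M\otimes_AP$ are projective. Hence applying $U_A$ (resp.\ $U_B$) to a projective resolution of any $W\in\modu\Lambda$ gives a projective resolution in $\modu A$ (resp.\ $\modu B$). Consequently, for $Z=\Omega^t_\Lambda(W)$, the module $X=U_A(Z)$ agrees with $\Omega^t_A(U_AW)$ up to projective summands, and similarly for $Y$. The factors $\overline{X}$ and $\overline{Y}$ will be chosen in $\Omega^t_A(\modu A)$ and $\Omega^t_B(\modu B)$, possibly after splitting off projective summands, which do not affect $\Omega$. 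The hypotheses then apply: $M\otimes_AP=0$ for $P\in\mathcal{P}_A\overline{X}$ and $N\otimes_BQ=0$ for $Q\in\mathcal{P}_B\overline{Y}$.

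\textbf{Step 3 (the kernel splits).} With these vanishing conditions, the cover of Step 1 becomes $(P_0(\overline{X}),0,0,0)\oplus(0,Q_0(\overline{Y}),0,0)$. Its kernel is therefore of the form $(K_1,K_2,0,0)$, since the structure maps of the cover are zero and the kernel inherits them. I will then identify $K_1$ with $\Omega_A(\overline{X})$ and $K_2$ with $\Omega_B(\overline{Y})$, using $M\otimes_AN=0=N\otimes_BM$ to control the contribution of ${\rm Im}\,g$ and ${\rm Im}\,f$. A module with zero structure maps splits as $(K_1,0,0,0)\oplus(0,K_2,0,0)$, which is the claimed decomposition.

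\textbf{Main obstacle.} I expect the identification in Step 3 to be the main difficulty: showing that the kernel components really are $\Omega_A(\overline{X})$ and $\Omega_B(\overline{Y})$, rather than extensions involving ${\rm Im}\,g$ and ${\rm Im}\,f$. My first attempt will be to show directly that ${\rm Im}\,g=0$ and ${\rm Im}\,f=0$ for $Z\in\Omega^t$. Here $Z$ embeds in a projective $\Lambda$-module whose off-diagonal parts are $N\otimes_BQ$ and $M\otimes_AP$. For $t\geq1$ these vanish by the inductive form of Proposition \ref{Resol. proy y sizigias utiles} combined with $M\otimes_AN=0=N\otimes_BM$. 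Then $\overline{X}=X$, $\overline{Y}=Y$, and the statement reduces to Proposition \ref{Resol. proy y sizigias utiles}.2 and 4.
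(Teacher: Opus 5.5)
\textbf{There is a genuine gap.} The conclusion of your last paragraph ($f=g=0$ for $Z\in\Omega^t(\modu\Lambda_{(0,0)})$, $t\ge 1$) is true. The reason you give for it is false, however, and the argument that actually works is missing from your plan.

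\textbf{The false claim.} You say $Z$ sits in a projective whose off-diagonal parts $M\otimes_AP$, $N\otimes_BQ$ vanish once $t\ge1$. That projective is the cover $T_A(P)\oplus T_B(Q)$ of $\Omega^{t-1}_\Lambda(W)$. So $P=P_0(X_{t-1}/\mathrm{Im}\,g_{t-1})$, where $X_{t-1}$ is only a $(t-1)$-st syzygy, and when $t=n$ the hypothesis says nothing about it. An inductive use of the vanishing hypothesis cannot start at an arbitrary $W$.

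A counterexample: let $\Lambda$ be the path algebra of the quiver with arrows $1\to2$, $1'\to2'$, $1\to2'$, $1'\to2$. Take $A=k(1\to2)$, $B=k(1'\to2')$, $M$ spanned by the arrow $1\to2'$ and $N$ by $1'\to2$.
\begin{itemize}
\item $M\cong Be_{2'}\otimes_k e_1A$ and $N\cong Ae_2\otimes_k e_{1'}B$ are projective bimodules, and $M\otimes_AN=0=N\otimes_BM$.
\item $n=m=t=1$, and the minimal first syzygies over $A$ are sums of $P_2$, with $M\otimes_AP_2=0$; symmetrically for $B$. (Minimal syzygies are what the paper must mean, since otherwise $A\in\Omega^n(\modu A)$ would force $M=0$.)
\item Yet $Z=\Omega_\Lambda(S_1)=\mathrm{rad}\,T_A(P_1)=(S_2,S_{2'},0,0)$ lies in $T_A(P_1)$, whose off-diagonal part $M\otimes_AP_1\cong S_{2'}$ is nonzero.
\end{itemize}
Here $f=0$ only because $M\otimes_AS_2=0$. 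The paper's own formula for $Z$ likewise has $B$-component containing $M\otimes_AP_0(\Omega^{t-1}_AX')$, which need not vanish.

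\textbf{The missing idea: control the extra projective summands.} In Step 2 you discard the projective summands of $X=U_A(Z)$ beyond $\Omega^t_A(U_AW)$ as not affecting $\Omega$. Over $\Lambda$ they do. The module $(P'',0,0,0)$ is projective only if $M\otimes_AP''=0$; otherwise $\Omega_\Lambda(P'',0,0,0)=(0,M\otimes_AP'',0,0)\neq0$, which would spoil the claimed decomposition. The argument runs as follows.
\begin{enumerate}
\item Applying $U_A$ to your Step 1 cover of $Z_{k-1}=\Omega^{k-1}_\Lambda W$ gives $0\to X_k\to P\oplus N\otimes_BQ\to X_{k-1}\to0$.
\item Since $P=P_0(X_{k-1}/\mathrm{Im}\,g_{k-1})$ is a direct summand of $P_0(X_{k-1})$, Krull--Schmidt gives $X_k\cong\Omega_A(X_{k-1})\oplus P''$ with $P''$ a direct summand of $N\otimes_BQ$. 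Hence $M\otimes_AP''=0$, because $M\otimes_AN=0$.
\item Inductively, $X_t\cong\Omega^t_A(U_AW)\oplus P''$ with $M\otimes_AP''=0$.
\item Since $M\otimes_A-$ is right exact, $M\otimes_AX$ is a quotient of $M\otimes_AP_0(X)=0$ for every $X\in\Omega^n(\modu A)$. If $n=0$, simply $M=M\otimes_AA=0$.
\item Therefore $M\otimes_AP_0(X_t)=0$ and $M\otimes_AX_t=0$, so $f=0$. Symmetrically $g=0$ and $N\otimes_BQ_0(Y_t)=0$.
\end{enumerate}
Then $Z=(X_t,0,0,0)\oplus(0,Y_t,0,0)$. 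Proposition \ref{cubiertas proy en Lambda} gives the statement with $\overline{X}=\Omega^t_A(U_AW)$ and $\overline{Y}=\Omega^t_B(U_BW)$.
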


\begin{proof}

\noindent We observe that if $t=0$ then $A$ and $B$ are $0$-locally Gorenstein algebras. So, the conditions: for each  $X\in \Omega^n(\modu A)$ we have that $M\otimes_A P=0$ for all $ P\in\mathcal{P}_A X$ and for each $Y\in  \Omega^m(\modu B)$
we have that $N\otimes_B Q=0$ for all $Q\in\mathcal{P}_B Y$, imply that $M=0$ and $N=0$. Then, $\Lambda_{(0,0)}=A \times B$ is a  product of algebras $A$ and $B$. The result follows directly.\\

\noindent We assume $t\geq 1$, let $(X,Y,f,g)\in \Omega^t(\modu\Lambda_{(0,0)})$ then there is $(X', Y', f', g')\in \modu\Lambda_{(0,0)}$ such that $$(X,Y,f,g)=\Omega^t_{\Lambda}((X', Y', f', g')).$$

  \noindent Then we have that:
   $$\begin{array}{cl}
        (X,Y,f,g)&= \Omega^{t-1}_{\Lambda}(\Omega_{\Lambda}(X',Y',f', g')) \\
        &=\Omega^{t-1}_{\Lambda}((\Omega_A (X'),M\otimes_A P_0^{X'},1\otimes i_0,0)\oplus (N\otimes_B Q_0^{Y'},\Omega_B (Y'),0,1\otimes j_0))\\
        
        & =\Omega^{t-2}_{\Lambda}(\Omega_{\Lambda}((\Omega_A (X'),M\otimes_A P_0^{X'},1\otimes i_0,0))\oplus \Omega_{\Lambda}((N\otimes_B Q_0^{Y'},\Omega_B (Y'),0,1\otimes j_0)))\\
        & =\Omega_{\Lambda}^{t-2}(\Omega_{\Lambda}((\Omega_A (X'),0,0,0))\oplus \Omega_{\Lambda}((0,\Omega_B (Y'),0,0)))\\
        & =\Omega_{\Lambda}^{t-1}((\Omega_A (X'),0,0,0)\oplus (0,\Omega_B (Y'),0,0)) (*).
   \end{array}$$
\noindent where the second and fourth equalities are satisfied by Proposition 3.3 \cite{MD} and by (2) and (3) of Remark 2.12 of \cite{LV}, respectively. Finally,  by (4) and (5) of Remark 2.12  of \cite{LV} we have that
$$\begin{array}{cl}
       (X,Y,f,g) & 
       =\Omega_\Lambda((\Omega_A^{t-1}(X'),0,0,0)\oplus (0,\Omega_B^{t-1} (Y'),0,0)),
   \end{array}$$
   \noindent so
$$\begin{array}{cl}
       (X,Y,f,g) & 
       =\Omega_\Lambda((\Omega_A^{t-1}(X'),0,0,0))\oplus \Omega_\Lambda((0,\Omega_B^{t-1} (Y'),0,0))\\
       &=(\Omega_A^{t} (X'),M\otimes_A P_0^{\Omega_A^{t-1}(X')},1\otimes i_0,0)\oplus (N\otimes_B P_0^{\Omega_B^{t-1}(Y')},\Omega_B^t (Y'),0,1\otimes j_0)
 \end{array}$$
\noindent where the second equality is satisfied by Proposition 3.3 of \cite{MD}. Note that from the last equality we have that  $\Omega^t_A(X')$ is a direct summand of $X$ and $\Omega^t_A(Y')$ is a direct summand of $Y$.
   
\noindent Then, applying $\Omega_\Lambda$ to the last equality in $(*)$, we have that 
$$\begin{array}{cl}
       \Omega_\Lambda(X,Y,f,g) & 
       =\Omega_\Lambda^t((\Omega_A(X'),0,0,0)\oplus (0,\Omega_B (Y'),0,0))\\
       &=\Omega_\Lambda((\Omega_A^{t}(X'),0,0,0)\oplus (0,\Omega_B^{t} (Y'),0,0))\\
       &=(\Omega_A^{t+1}(X'),0,0,0)\oplus (0,\Omega_B^{t+1} (Y'),0,0))
   \end{array}$$
\noindent where the second  equality is satisfied by Remark 2.12 of \cite{LV}, and the third equality is satisfied by Proposition \ref{Resol. proy y sizigias utiles} since $\Omega^t_A(X')$ is a direct summand of $X$ and $\Omega^t_A(Y')$ is a direct summand of $Y$. 

\noindent Taking $\overline{X}=\Omega^t_A(X')$ e $\overline{Y}=\Omega^t_B(Y')$, we have what we desired.
\end{proof}

\noindent Note that in the previous proof, for the case $t=0$,  have that $\Lambda_{(0,0)}=A\times B$ is a $0$-locally Gorenstein algebra (and therefore $1$-locally Gorenstein algebra).

\begin{theorem}\label{Teo4}
  Let $A$ and $B$ $n$ and $m$ locally Gorenstein algebras, respectively, and let $_BM_A$ and $_AN_B$ projective bimodules such that  $M\otimes_A N=0$ and $N\otimes_BM=0$. Let $\Lambda_{(0,0)}=\begin{pmatrix}A& _AN_B \\ _BM_A & B \end{pmatrix}$, be the algebra associated with the Morita context. If for each  $X\in \Omega^n(\modu A)$ we have that $M\otimes_A P=0$ for all $ P\in\mathcal{P}_A X$ and for each $Y\in  \Omega^m(\modu B)$ we have that $N\otimes_B Q=0$ for all $Q\in\mathcal{P}_B Y$, then:
  \begin{enumerate}
      \item[(1)] 
      all the modules in  $\Omega^{t+1}_{\Lambda}(\modu \Lambda)$ are locally Gorenstein $\Lambda$-modules, with $t=max\{m,n\}$;
      \item[(2)]
$\Lambda_{(0,0)}$ is a $t+1$ locally Gorenstein algebra.
        \end{enumerate} 
\end{theorem}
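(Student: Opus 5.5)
Part (2) is simply a restatement of (1) via Definition \ref{def algebras l-G}, so the work lies in (1). Take $Z\in\Omega^{t+1}_\Lambda(\modu\Lambda)$, say $Z=\Omega_\Lambda(W)$ with $W\in\Omega^t_\Lambda(\modu\Lambda)$. By Lemma \ref{lema 5.9},
$$Z=(\Omega_A(\overline{X}),0,0,0)\oplus(0,\Omega_B(\overline{Y}),0,0),$$
with $\overline{X}\in\Omega^t_A(\modu A)\subseteq\Omega^n_A(\modu A)$ and $\overline{Y}\in\Omega^t_B(\modu B)\subseteq\Omega^m_B(\modu B)$. Put $X_1=\Omega_A(\overline{X})$ and $Y_1=\Omega_B(\overline{Y})$. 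These still lie in $\Omega^n(\modu A)$ and $\Omega^m(\modu B)$. Hence $X_1$ is a locally Gorenstein $A$-module and $Y_1$ is a locally Gorenstein $B$-module, and moreover $M\otimes_AP=0$ for $P\in\mathcal{P}_AX_1$ and $N\otimes_BQ=0$ for $Q\in\mathcal{P}_BY_1$.

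By Proposition \ref{Resol. proy y sizigias utiles}, the minimal projective resolution of $Z$ is the direct sum of $(P_\bullet,0,0,0)$ and $(0,Q_\bullet,0,0)$, where $P_\bullet$ and $Q_\bullet$ are the minimal resolutions of $X_1$ and $Y_1$. Consequently $\mathcal{P}_\Lambda Z$ consists of the modules $(P,0,0,0)$ with $P\in\mathcal{P}_AX_1$ together with $(0,Q,0,0)$ with $Q\in\mathcal{P}_BY_1$; these are indeed projective since $M\otimes P=0$ and $N\otimes Q=0$. We must therefore show that $\Ext^i_\Lambda(Z,T)=0$ for $i\ge1$ and each such $T$. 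We compute these Ext groups by applying $\Hom_\Lambda(-,T)$ to the resolution above and splitting into four pieces.

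For the diagonal pieces, a morphism $(a,b)\colon(P',0,0,0)\to(P,0,0,0)$ is just an $A$-map $a$, since the compatibility squares involve only zero objects. Hence $\Hom_\Lambda((P_\bullet,0,0,0),(P,0,0,0))\cong\Hom_A(P_\bullet,P)$, and its cohomology gives $\Ext^i_A(X_1,P)=0$ for $i\ge1$, because $X_1$ is locally Gorenstein. The $B$-side is symmetric. For the mixed pieces, a morphism $(0,Q',0,0)\to(P,0,0,0)$ is a pair $(a,b)$ with $a\colon0\to P$ and $b\colon Q'\to0$, so it is zero; likewise in the other direction. Therefore $\Hom_\Lambda((0,Q_\bullet,0,0),(P,0,0,0))=0$ and these contribute nothing. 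Additivity of Ext then gives $\Ext^i_\Lambda(Z,T)=0$, so $Z$ is locally Gorenstein.

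The main obstacle is ensuring that Lemma \ref{lema 5.9} and Proposition \ref{Resol. proy y sizigias utiles} apply to the actual syzygies. This requires that the hypotheses on $\mathcal{P}_A$ and $\mathcal{P}_B$ pass to $X_1$, which is an element of $\Omega^{t+1}$ rather than merely $\Omega^n$. It also requires verifying that the modules $(P,0,0,0)$ are genuinely the indecomposable projectives occurring. For the latter we use Proposition \ref{cubiertas proy en Lambda}, which identifies the projective cover of $(X_1,0,0,0)$ as $(P_0(X_1),M\otimes P_0(X_1),\dots)$; this equals $(P_0(X_1),0,0,0)$ because $M\otimes_AP_0(X_1)=0$, and the same reasoning applies at each stage of the resolution. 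A secondary point is the $t=0$ case: there $\Lambda=A\times B$, as noted after Lemma \ref{lema 5.9}, and the argument is immediate.
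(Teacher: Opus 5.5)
Your proof is correct and follows the paper's argument. Lemma \ref{lema 5.9} splits the $(t+1)$-st syzygy as $(\Omega_A(\overline{X}),0,0,0)\oplus(0,\Omega_B(\overline{Y}),0,0)$, Proposition \ref{Resol. proy y sizigias utiles} identifies its minimal resolution, and the Hom complexes reduce to those over $A$ and $B$. The only cosmetic difference is that the paper gets $\Hom_\Lambda(T_A(P_i),(P,Q,0,0))\cong\Hom_A(P_i,P)$ from the adjunction $(T_A,U_A)$ and tests against $(P,Q,0,0)$, whereas you read the identification off the definition of $\Lambda_{(0,0)}$-morphisms, test against indecomposables, and note that the mixed Hom groups vanish.
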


\begin{proof}
\begin{enumerate}
    \item[(1)]
Let $t=max\{m,n\}$ and let $(X',Y',f',g')$ be a $\Lambda$-module in $\Omega^{t+1}(\modu \Lambda)$. Then, there is $(X,Y,f,g)\in \Omega^t(\modu \Lambda)$ such that $\Omega_{\Lambda}((X,Y,f,g))=(X',Y',f',g')$.

\noindent By the previous Lemma we have that:

$$(X',Y',f',g')=\Omega_\Lambda (X,Y,f,g)=(\Omega_A(\overline{X}),0,0,0)\oplus(0,\Omega_B(\overline{Y}),0,0) $$
where $\overline{X}\in \Omega_A^{t}(\modu A)$ and $\overline{Y}\in \Omega_B^{t}(\modu B)$, and therefore each one of them is locally Gorenstein, as $A$-module, $B$-module, respectively. This implies that $X'= \Omega_A(\overline{X});\ Y'= \Omega_B(\overline{Y})$ and:

$$ \mathcal{P}_{\Lambda} (X',Y', f',g') = \mathcal{P}_{\Lambda} (X',Y',0,0)=\mathcal{P}_{\Lambda} (\Omega_A(\overline{X}),0,0,0) \cup \mathcal{P}_{\Lambda} (0,\Omega_B(\overline{Y}),0,0).$$

 So, we have to show that Ext$^i_\Lambda((X',Y',0,0),(P,Q,0,0))=0$ for all  $i\geq 1$ and for all $(P,Q,0,0)\in \mathcal{P}_{\Lambda} (X',Y',0,0)$.

From the projective resolutions:
$$\xymatrix{\cdots\ar[r]&P_2\ar[r]&P_1\ar[r]&P_0\ar[r]&\Omega_A (\overline{X})\ar[r]&0}\ \mbox{ in }\modu A,$$
and
$$\xymatrix{\cdots\ar[r]&Q_2\ar[r]&Q_1\ar[r]&Q_0\ar[r]&\Omega_B(\overline{Y})\ar[r]&0}\ \mbox{ in }\modu B,$$
we obtain, since $M\otimes_A P=0$,  $\forall P\in\mathcal{P}_A X$ and  $N\otimes_B Q=0$, $\forall Q\in\mathcal{P}_B Y$, the following exact sequences:
$$\xymatrix{\cdots\ar[r]&T_A(P_2)\ar[r]&T_A(P_1)\ar[r]&T_A(P_0)\ar[r]&
 (\Omega_A (\overline{X}),0,0,0) \ar[r]&0}$$
and
$$ \xymatrix{\cdots\ar[r] & T_B(Q_2)\ar[r] & T_B(Q_1) \ar[r] & T_B(Q_0) \ar[r] &(0,\Omega_B(\overline{Y}),0,0) \ar[r]&0}.$$

\noindent Then, since the pairs of functors  $(T_A,U_A)$ and $(T_B,U_B)$ are pairs of adjoint functors we have:

$$\scriptsize
\xymatrix@C=2mm@R=5mm{\Hom_\Lambda((\Omega_A(\overline{X}),0,0,0),(P,0,0,0))\ar[d]\ar[r]&\Hom_\Lambda(T_A(P_0),(P,0,0,0))\ar[d]^{\simeq}\ar[r]&\Hom_\Lambda(T_A(P_1),(P,0,0,0))\ar[d]^{\simeq}\ar[r]&\Hom_\Lambda(T_A(P_2),(P,0,0,0))\ar[d]^{\simeq}\cdots&\\
\Hom_A(\Omega_A(\overline{X}),P)\ar[r]&\Hom_A(P_0,P)\ar[r]&\Hom_A(P_1,P)\ar[r]&\Hom_A(P_2,P)\ \ \cdots&}$$

and

$$\scriptsize
\xymatrix@C=2mm@R=5mm{\Hom_\Lambda(0,(\Omega_B(\overline{Y}),0,0),(0,Q,0,0))\ar[d]\ar[r]&\Hom_\Lambda(T_B(Q_0),(0,Q,0,0))\ar[d]^{\simeq}\ar[r]&\Hom_\Lambda(T_B(Q_1),(0,Q,0,0))\ar[d]^{\simeq}\ar[r]&\Hom_\Lambda(T_B(Q_2),(0,Q,0,0))\ar[d]^{\simeq}\cdots&\\
\Hom_B(\Omega_B(\overline{Y}),Q)\ar[r]&\Hom_B(Q_0,Q)\ar[r]&\Hom_B(Q_1,Q)\ar[r]&\Hom_B(Q_2,Q)\ \ \cdots&}$$

where from

$$\Ext^{i}_\Lambda((\Omega_A(\overline{X}),0,0,0), (P,0,0,0))\simeq \Ext^{i}_A(\Omega_A(\overline{X}),P)$$
and
$$\Ext^{i}_\Lambda(0,(\Omega_B(\overline{Y}),0,0), (0,Q,0,0))\simeq \Ext^{i}_B(\Omega_B(\overline{Y}),Q).$$ 

Therefore, for $(P,Q,0,0)\in \mathcal{P}_{\Lambda} (X',Y',0,0)$, we have that:
$$\Ext^i_\Lambda((X',Y',f',g'),(P,Q,0,0))=\Ext^{i}_\Lambda(\Omega_{\Lambda}((X,Y,f,g)),(P,Q,0,0))=$$
$$= \Ext^{i}_\Lambda((\Omega_A(\overline{X}),0,0,0)\oplus(0,\Omega_B(\overline{Y}),0,0)),(P,Q,0,0))=$$ $$=\Ext^{i}_\Lambda((\Omega_A(\overline{X}),0,0,0),(P,Q,0,0))\oplus \Ext^{i}_\Lambda((0,\Omega_B(\overline{Y}),0,0),(P,Q,0,0))\simeq$$
$$\Ext^{i}_A(\Omega_A(\overline{X}),P)\oplus \Ext^{i}_B(\Omega_B(\overline{Y}),Q)=0,\, \forall i \geq 1.$$

\noindent The last equality follows from the fact that $\Omega_A(\overline{X})$ and $\Omega_B(\overline{Y})$ are locally Gorenstein modules, as $A$-module, $B$-module, respectively,  and that $P\in \mathcal{P}_A X'\subseteq \mathcal{P}_A \Omega_A (\overline{X}) $ and $Q\in \mathcal{P}_B Y'\subseteq \mathcal{P}_B \Omega_B(\overline{Y})$,  for what was shown before.

\item[(2)]
Is a direct consequence of the first item.
    
\end{enumerate}
\end{proof}
\subsection{Examples constructed via Morita contexts}

We present here four examples, constructed via Morita contexts, two respecting the hypotheses of Theorem \ref{Teo4} and two in which the hypotheses do not hold. In one of these examples, we take non-projective bimodules, and in the second on, the tensors between the bimodules are not zero. However, the second of these examples is also a locally Gorenstein algebra.\\

\noindent The following example verifies the hypotheses of Theorem \ref{Teo4}, generating a new locally Gorenstein algebra from locally Gorenstein algebras, using the Morita context.

\begin{eje}
Let $Q$ be the quiver:
$$\xymatrix{5\ar@(dl,ul)&4\ar[l]& 3\ar[l]& 2 \ar[l] & 1 \ar[l] }$$

\noindent We consider the algebra $\Lambda_{(0,0)}=\begin{pmatrix}
        A & N \\  M  & B \end{pmatrix}$ arising from a Morita context, talking $A=B=KQ/F^2$ and the bimodules  $M=Be_6\otimes e_3A$ and  $N=Ae_1\otimes e_8B$. Observe that $A$ and $B$ are 4-locally Gorenstein algebras. 
        
        The associated quiver to $\Lambda_{(0,0)}$ is:

$$\xymatrix{ &&5\ar@(dl,ul) &4\ar[l]& 3\ar[l] \ar@(d,u)[d]& 2 \ar[l] & 1 \ar[l] \\
\ar@(dl,ul) 10&9\ar[l]& 8\ar[l] \ar@(d,d)[urrrr]& 7 \ar[l] & 6 \ar[l]   & & }.$$

\vspace{0.5 cm}
\noindent Observe that there is no new relations.\\

\noindent The algebra $\Lambda_{(0,0)}$ is a $4$-locally Gorenstein algebra, just like  $A$ and $B$.\\
\end{eje}

Next we give an example of a locally Gorenstein infinite type representation algebra.

\begin{eje}\label{5.11} We consider a double point extension of the famous selfinjective algebra given by Liu and Schulz, \cite{R}, obtaining an Artin algebra given by the following quiver:
$$\xymatrix{Q:&  &3\ar@(ul,dl)_{\alpha_3}\ar@(dl,dr)_{\alpha_4}\ar@(ur,ul)_{\alpha_5}&2\ar[l]^{\alpha_2}&1\ar[l]^{\alpha_1}& }$$
 and the relations: $I=\{\alpha_2\alpha_1,\ \alpha_3^2,\ \alpha_4^2,\ \alpha_5^2,\ \alpha_4\alpha_3+r\alpha_3\alpha_4,\ \alpha_5\alpha_4+r\alpha_3\alpha_5,\ \alpha_3\alpha_5+r\alpha_5\alpha_3\}$, where $r\in k$ is a non zero element of an algebraically closed field $k$.

 Now we consider the Morita context algebra defined by: $\Lambda_{(0,0)}=\begin{pmatrix}
        A &  N \\  M  & B \end{pmatrix}$, taking $A=B=kQ/I$ and the bimodules $_BM_A$ and  $_AN_B$ such that the quiver of  $\Lambda_{(0,0)}$ is:
       $$\xymatrix{ &3\ar@(ul,dl)_{\alpha_3}\ar@(dl,dr)_{\alpha_4}\ar@(ur,ul)_{\alpha_5}&2\ar[l]^{\alpha_2}&1\ar[l]^{\alpha_1}\ar[ddl]_{\beta_1}&\\
        \\
        &3'\ar@(ul,dl)_{\alpha'_3}\ar@(dl,dr)_{\alpha'_4}\ar@(ur,ul)_{\alpha'_5}&2'\ar[l]^{\alpha'_2}&1'\ar[l]^{\alpha'_1}\ar[uul]_{\beta_2}&}$$ 
 with no new relations. By Theorem \ref{Teo4} we get a locally Gorenstein infinite representation type  algebra.\\
 \end{eje}

Next is an example where the bimodules $_BM_A$ and $_AN_B$ are not projective. We show that the algebra  $\Lambda_{(0,0)}$,
obtained is not locally Gorenstein.

\begin{eje}

\noindent Let $Q$ be the quiver:
$$\xymatrix{Q:&3 \ar@(dl,ul) & 2 \ar[l] & 1 \ar[l] }$$

\noindent We consider the algebra $\Lambda_{(0,0)}=\begin{pmatrix}
        A &  N \\  M  & B \end{pmatrix}$ arising from a Morita context, taking $A=KQ/F^2$, $B=K$ and the bimodules $_BM_A$ and  $_AN_B$, in such a way that the quiver associated with $\Lambda_{(0,0)}$ is:

$$\xymatrix{  \ar@(d,u)[dr]_{\beta_1} 3 \ar@(dl,ul)^{\alpha_3} & 2 \ar[l]^{\alpha_2} & 1 \ar[l]^{\alpha_1} \\&
  4  \ar@(r,d)[ur]_{\beta_2}  & }$$
and the  set of relations is: $\{\alpha_2\alpha_1,\  \alpha_3\alpha_2,\ \alpha_3^2,\ \beta_1\alpha_2,\  \beta_2\beta_1,\ \alpha_1\beta_2 \}$.\\

\noindent First observe that above construction implies both $_BM_A$ and $_AN_B$ are not projective bimodules.  The indecomposable projective  $\Lambda_{(0,0)}$-modules and indecomposable injective $\Lambda_{(0,0)}$-modules can be represented as:
$$\xymatrix{P_1:\txt{1\\2} &P_2:\txt{2\\3}& P_3:\txt{3\\3 4 \\4 \hspace{0.2 cm} }&P_4:\txt{4\\1}&I_1:\txt{4\\1}&I_2:\txt{1\\2}&I_3:\txt{3 2\\\hspace{1mm}3}&I_4:\txt{3\\3\\4}}$$
\noindent The minimal projective resolution of $S_3$ is:
$$\xymatrix@C=2mm{\cdots&\ar[dr]P_3\oplus P_3\ar[rr]&&\ar[dr]P_3\oplus P_2\ar[rr]&&\ar[dr]P_3\oplus P_1\ar[rr]&&P_3\oplus P_4\ar[rr]\ar[dr]&&P_3\ar[r]&S_3\ar[r]&0\\\ &&\ar[ur]\txt{3\\4}\oplus 3&&\ar[ur]\txt{3\\4}\oplus 2&&\ar[ur]\txt{3\\4}\oplus 1&&\txt{3\\4}\oplus 4\ar[ur]&&&}$$
\noindent Then $S_3$ is a direct summand of $\Omega^{4k}(S_3)$ for all $k\geq 0$, so $S_3$ its in $\Omega^{4k}$(mod $\Lambda_{(0,0)}$) for all $k\geq 0$.
 
\noindent On the other hand, since  Ext$^1_{\Lambda_{(0,0)}}(S_3,P_3)\neq 0$ we have that Ext$^{4k+1}_{\Lambda_{(0,0)}}(S_3,P_3)\neq 0$ for all $t\geq 0$, and so $S_3$ is not  locally Gorenstein module, because $\mathcal{P}_{S_3}=\{P_1,P_2,P_3,P_4\}$.\\
\noindent Therefore $\Lambda_{(0,0)}$ is not  locally Gorenstein algebra.\\
\end{eje} 

 \begin{eje} Let's consider again the initial algebra given in the Example \ref{5.11}, that is the algebra given by the quiver:
$$\xymatrix{Q:& &3\ar@(ul,dl)_{\alpha_3}\ar@(dl,dr)_{\alpha_4}\ar@(ur,ul)_{\alpha_5}&2\ar[l]^{\alpha_2}&1\ar[l]^{\alpha_1}&}$$
 and the set of relations $I=\{\alpha_2\alpha_1,\  \alpha_3^2,\ \alpha_4^2,\ \alpha_5^2,\ \alpha_4\alpha_3+r\alpha_3\alpha_4,\ \alpha_5\alpha_4+r\alpha_3\alpha_5,\ \alpha_3\alpha_5+r\alpha_5\alpha_3\}$, where  $r\in k$ is a non zero element of an algebraically closed field $k$.

 Now we consider the Morita context algebra defined by: $\Lambda_{(0,0)}=\begin{pmatrix}
        A &  N \\  M  & B \end{pmatrix}$, taking $A=B=kQ/I$ and the  bimodules $_BM_A$ and  $_AN_B$ such that the quiver is given by:
       $$\xymatrix{ & 3\ar@(ul,dl)_{\alpha_3}\ar@(dl,dr)_{\alpha_4}\ar@(ur,ul)_{\alpha_5} & 2\ar[l]^{\alpha_2} & 1\ar[l]^{\alpha_1}\ar@/^/[dd]^{\beta_1} & \\
        \\
        & 3'\ar@(ul,dl)_{\alpha_3'}\ar@(dl,dr)_{\alpha_4'}\ar@(ur,ul)_{\alpha_5'} & 2'\ar[l]^{\alpha_2'} & 1'\ar[l]^{\alpha_1'}\ar@/^/[uu]^{\beta_2} & }$$ 
 and the set of relations is given by: $$I'=\{\alpha_2\alpha_1,\ \alpha_3^2,\ \alpha_4^2,\ \alpha_5^2,\ \alpha_4\alpha_3+r\alpha_3\alpha_4,\ \alpha_5\alpha_4+r\alpha_3\alpha_5,\ \alpha_3\alpha_5+r\alpha_5\alpha_3,\  \beta_1\beta_2,\ \beta_2\beta_1, $$
$$ \alpha_2'\alpha_1',\ \alpha_3'^2,\ \alpha_4'^2, \ \alpha_5'^2,\ \alpha_4'\alpha_3'+r\alpha_3'\alpha_4',\ \alpha_5'\alpha_4+r'\alpha_3'\alpha_5',\ \alpha_3'\alpha_5'+r\alpha_5'\alpha_3'\}.$$

 \noindent Although this example does not satisfy the hypotheses of Theorem \ref{Teo4}, namely, $M\otimes_A N\neq 0\neq N\otimes_B M$, it can be shown using Example \ref{ejemplo de alg n-ort}\ \textit{(3)} that the algebra $\Lambda_{(0,0)}$ is 3-locally Gorenstein.
 
 \end{eje}

\section*{Acknowledgements}
Author L.A. is grateful to Asociación de Universidades Grupo Montevideo for funding a one-month research stay at Instituto de Matemática y Estadística “Rafael Laguardia”, Universidad de la República, which enabled the development of this study.

\bibliographystyle{plain}
\bibliography{Referencias}

\end{document}